\documentclass{article}

\usepackage{graphicx} 
\usepackage{amsmath,amssymb, amsthm}
\usepackage{thmtools}

\usepackage[dvipsnames]{xcolor}
\usepackage{xspace}
\usepackage[most]{tcolorbox}
\usepackage{thmtools}
\usepackage{mathtools}
\usepackage{enumitem}
\usepackage{thm-restate}
\newtheorem{theorem}{Theorem}[section]

\newtheorem{lemma}[theorem]{Lemma}

\newtheorem{proposition}[theorem]{Proposition}

\newtheorem{observation}[theorem]{Observation}
\declaretheorem[name=Claim,style=remark]{claim}

\newcommand*\samethanks[1][\value{footnote}]{\footnotemark[#1]}
\newcommand{\dist}{\text{dist}}
\newcommand{\supp}{\text{supp}}

\newcommand{\mydiamond}{\rotatebox[origin=c]{45}{$\vcenter{\hbox{$\Box$}}$}}
\newenvironment{subproof}[1]{\par\noindent \textit{Proof of #1.}\ }{\hfill \mydiamond \par\vspace{11pt}}
\usepackage[english]{babel}

\usepackage[letterpaper,top=2cm,bottom=2cm,left=3cm,right=3cm,marginparwidth=1.75cm]{geometry}

\usepackage{amsmath}
\usepackage{graphicx}
\usepackage[colorlinks=true, allcolors=blue]{hyperref}
\usepackage{authblk}
\usepackage[noabbrev,capitalise,nameinlink]{cleveref}

\crefname{claim}{Claim}{Claims}

\begin{document}

\title{Asymmetric Homomorphism Thresholds for Graphs of Large Odd Girth}
\author[1]{Romain Bourneuf\samethanks}
\author[2]{Raphael Steiner\samethanks}
\author[3]{Stéphan Thomassé\samethanks}
\author[4]{Yuval Wigderson\samethanks}

\affil[1]{Univ. Bordeaux, CNRS, Bordeaux INP, LaBRI, UMR 5800, F-33400 Talence, France.}
\affil[2]{Department of Mathematics, ETH Zürich, 8092 Z\"urich, Switzerland.}
\affil[3]{ENS de Lyon, Université Claude Bernard Lyon 1, CNRS, Inria, LIP UMR 5668, Lyon, France.}
\affil[4]{Institute of Science and Technology Austria, 3400 Klosterneuburg, Austria.}

\maketitle

\begin{abstract}
We determine the asymmetric homomorphism threshold from graphs of odd girth at least $7$ to triangle-free graphs, showing that $\delta_{\mathrm{hom}}(\{C_3,C_5\},\{C_3\})=\frac{1}{9}$.
Equivalently, for every $\varepsilon>0$, every $n$-vertex graph of odd girth at least $7$ and minimum degree at least $(1/9+\varepsilon)n$ admits a homomorphism to a triangle-free graph of size bounded by a function of $\varepsilon$, while there exist graphs of odd girth at least $7$ and minimum degree at least $(1/9-\varepsilon)n$ for which no such bounded-size triangle-free homomorphic image exists.

More generally, for every $t\geq 3$, we prove $\delta_{\mathrm{hom}}(\{C_3,C_5\},\{K_t\})=\frac{1}{3t}$.
In particular, for every proper monotone class $\mathcal C$ of graphs, the threshold for graphs of odd girth at least $7$ to admit a homomorphism to a bounded-size graph in $\mathcal C$ is positive.
We further extend this phenomenon to arbitrary odd girth: for every $k\geq 2$ and every proper monotone subclass $\mathcal C$ of the class of graphs of odd girth at least $2k-1$, the threshold for graphs of odd girth at least $2k+3$ to admit a homomorphism to a bounded-size graph in $\mathcal C$ is positive.

These results disprove conjectures of Gishboliner, Hurley and Wigderson and exhibit a sharp contrast with the corresponding zero chromatic-threshold results. Our lower-bound constructions are based on high-dimensional Borsuk graphs, while the matching upper bounds use regularity arguments to recover the structure underlying these constructions.

\end{abstract}

\section{Introduction}
A central theme in extremal combinatorics is to study how the imposition of local constraints on a discrete object (e.g.\ forbidding a constant-sized subgraph) can yield consequences on the global structure. Many results in this direction state that if we impose a local constraint on a graph, then the global structure must become increasingly rigid as we insist on having more and more edges. For example, consider the constraint of having no triangles. For graphs with very few edges, this imposes essentially no constraint on the global structure. However, as the number of edges increases, the structure becomes more rigid; for example, an elegant result of Andr\'asfai \cite{A62} states that if the minimum degree of a triangle-free graph $G$ is more than $\frac 25 |V(G)|$, then $G$ must be bipartite, i.e.\ have a very rigid global structure. Moreover, once the number of edges increases beyond $\frac 14 |V(G)|^2$, the structure becomes even more rigid---such a graph simply does not exist \cite{mantel1907vraagstuk}. We remark that several recent breakthroughs in Ramsey theory and graph theory rely on the discovery that there exist highly unstructured triangle-free (or, more generally, $K_s$-free) graphs with substantially more edges than naive probabilistic reasoning would suggest \cite{2605.28793,2505.13371,2510.19718,2512.20392,2607.16118}.

In order to discuss this structural rigidity further, we introduce the following definition.
Given a family $\mathcal{F}$ of graphs, its \emph{chromatic threshold} $\delta_{\chi}(\mathcal{F})$ is the infimum of all $c \in [0, 1]$ for which there exists $C \in \mathbb{N}$ such that every $n$-vertex $\mathcal{F}$-free (i.e.\ which does not contain any $F \in \mathcal{F}$ as a (not necessarily induced) subgraph) graph with minimum degree at least $cn$ has chromatic number at most $C$. Thus, for example, the result of Andr\'asfai mentioned above shows that $\delta_\chi(C_3)\leq 2/5$.
A construction of Hajnal (see \cite{ES73}) shows that $\delta_{\chi}(C_3) \geq 1/3$.
Nearly 30 years later, Thomassen \cite{T02} proved a matching upper bound, thereby establishing  that $\delta_{\chi}(C_3) = 1/3$.
Brandt and Thomassé \cite{BT04} strengthened Thomassen's result by showing that $n$-vertex triangle-free graphs with minimum degree greater than $n/3$ have chromatic number at most $4$.
Thomassen \cite{T07} then proved that $\delta_{\chi}(C_5)=0$, which in particular implies $\delta_\chi(\{C_3,C_5\})=0$.
Said differently, the chromatic threshold of graphs with odd girth\footnote{The \emph{odd girth} of a graph $G$, denoted $og(G)$, is the length of a shortest odd cycle in $G$ (or $\infty$ if $G$ contains no odd cycle).} at least $7$ is $0$.
To see it, let $G$ be a graph with odd girth at least $7$ and minimum degree $cn$ for some $c > 0$. 
Let $\{v_1, \ldots, v_k\} \subseteq V(G)$ be maximal such that $N(v_i) \cap N(v_j) = \emptyset$ for all $i \neq j$. 
Since $G$ has minimum degree at least $cn$, it follows that $k \leq 1/c$. 
Since the set $\{v_1, \ldots, v_k\}$ is maximal, every $v \in V(G)$ is at distance at most two from some $v_i$. 
However, since $G$ contains no $C_3$ and no $C_5$, $N(v_i)$ and $N^2(v_i) \cup \{v_i\}$ are independent sets for every $1\le i\le k$, which together cover the full vertex set of $G$. 
Thus, $G$ has chromatic number at most $2k\le 2/c$.

While the chromatic threshold captures one way of formalizing the structural rigidity of dense $\mathcal F$-free graphs, we would often like to understand their structure more precisely. Indeed, while having bounded chromatic number is a natural way of insisting that a graph be ``highly structured'', it is natural to expect that dense $\mathcal F$-free graphs should have some additional structure reflecting their $\mathcal F$-freeness. For instance, in the result of Andr\'asfai discussed above, we see that a sufficiently dense triangle-free graph is bipartite, and of course all bipartite graphs are triangle-free; hence, the structural rigidity also provides a \emph{witness} for the $\mathcal F$-freeness in this case.

To formalize this idea, we recall the following definition.
The \emph{homomorphism threshold} $\delta_{\hom}(\mathcal{F})$ of $\mathcal{F}$ is the infimum of all $c \in [0, 1]$ such that there exists a finite $\mathcal{F}$-free graph $H_c$ such that every $n$-vertex $\mathcal{F}$-free graph with minimum degree at least $cn$ admits a homomorphism to $H_c$.
Equivalently\footnote{When every graph in $\mathcal{F}$ is connected, as is the case in the paper.}, $\delta_{\hom}(\mathcal{F})$ is the infimum of all $c$ such that every  $n$-vertex $\mathcal{F}$-free graph with minimum degree at least $cn$ admits a homomorphism to an $\mathcal{F}$-free graph of bounded size, where the bound depends only on $c$.
It is immediate from the definition that $\delta_{\chi}(\mathcal{F}) \leq \delta_{\hom}(\mathcal{F})$.
These two values sometimes coincide: Łuczak \cite{L06} proved that $\delta_{\hom}(C_3) = 1/3$.
However, they are not always equal, for instance $\delta_{\hom}(\{C_3, C_5\}) = 1/5$, as proved by Letzter and Snyder \cite{LS18}.
The lower bound follows from a construction of Andr\'{a}sfai \cite{A62,A64}, see also \cite{E57}: Consider the graph on the unit circle $\mathbb S^1$ where two vertices $u,v$ are adjacent whenever the angle between them is greater than $4\pi/5$. This graph is $\{C_3, C_5\}$-free and contains $n$-vertex induced subgraphs with minimum degree arbitrarily close to $n/5$ with no homomorphism to any fixed $\{C_3, C_5\}$-free graph.
We remark that there have recently been a number of exciting new results on homomorphism thresholds, e.g.\ \cite{HRS26,gishboliner2026homomorphismvcdimensionthresholdsspectra,HLZH26,Sankar,WWX26}.

A natural relaxation of the homomorphism threshold is to require the existence of a homomorphism to a graph $H$ that is not $\mathcal{F}$-free, but $\mathcal{F}'$-free, for some $\mathcal{F}' \neq \mathcal{F}$.
Following the notation of Gishboliner, Hurley and Wigderson \cite{GHW26}, given two families $\mathcal{F}_1, \mathcal{F}_2$ of graphs, their \emph{asymmetric homomorphism threshold} $\delta_{\hom}(\mathcal{F}_1, \mathcal{F}_2)$ is the infimum of all $c \in [0, 1]$ such that there exists a finite $\mathcal{F}_2$-free graph $H_c$ such that every $n$-vertex $\mathcal{F}_1$-free graph with minimum degree at least $cn$ admits a homomorphism to $H_c$.
In \cite{GHW26}, they show that $\delta_{\hom}(\{C_3, C_5, C_7\}, \{C_3\}) = 0$, which contrasts with the result of Ebsen and Schacht \cite{ES20} that $\delta_{\hom}(\{C_3, C_5, C_7\}) = 1/7$.
This comparison isolates what the positive symmetric threshold measures: the obstruction is not to bounded homomorphic compression itself, but to compression that preserves the full odd-girth condition. Indeed, every positive minimum-degree ratio forces a bounded triangle-free image, whereas below $1/7$ one cannot in general choose this image to avoid both $C_5$ and $C_7$. This naturally raises the question of how much of the forbidden-cycle structure of the source can always be retained in a bounded image.
In this direction, Gishboliner, Hurley and Wigderson conjectured that $\delta_{\hom}(\{C_3, C_5\}, \{C_3\}) = 0$, which would strengthen Thomassen's result that $\delta_{\chi}(\{C_3, C_5\}) = 0$ and pinpoint the complexity of the homomorphism threshold for $\{C_3, C_5\}$-free graphs. Our first main result is to disprove this conjecture, by precisely determining $\delta_{\hom}(\{C_3, C_5\}, \{C_3\})$ as an absolute positive constant.

\begin{restatable}{theorem}{thresholdtriangle}\label{thm:threshold-triangle}
    $\delta_{\hom}(\{C_3, C_5\}, \{C_3\}) = \frac{1}{9}$.
\end{restatable}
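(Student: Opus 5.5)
The proof splits into a lower bound $\delta_{\hom}(\{C_3,C_5\},\{C_3\})\ge \frac19$, which is a construction, and an upper bound $\le\frac19$, which uses regularity.

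\textbf{Lower bound.} I would build the examples from high-dimensional Borsuk graphs. For a dimension $d$ and a small angle $\alpha>0$, let $B_d(\alpha)$ be the graph on $\mathbb S^d$ in which $x\sim y$ whenever the angle between $x$ and $y$ exceeds $\pi-\alpha$. Any closed walk of odd length $\ell$ in $B_d(\alpha)$ would require $\ell(\pi-\alpha)$ to wrap back to antipodality, which fails when $\ell\alpha$ is small. So $B_d(\alpha)$ has large odd girth, but its minimum degree ratio is tiny.

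To raise the degree ratio to $\frac19-\varepsilon$ while keeping odd girth at least $7$, I would take a $\frac19$-density blow-up gadget. Concretely, I would use $9$ ``layers'' $V_0,\dots,V_8$, each a copy of (a fine finite sample of) $\mathbb S^d$ of equal measure. Edges go between consecutive layers $V_i,V_{i+1}$ (indices mod $9$) along near-antipodal or near-identical pairs, alternating so that going once around the $9$-cycle of layers composes to the antipodal map. Each vertex then sees about a $\frac19$ fraction of the vertex set. Short odd cycles would have to wind around the layer cycle, and winding once costs length $9$. A $C_3$ or $C_5$ inside fewer layers would instead have to close up geometrically, which the antipodal twist forbids.

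Next, suppose $f\colon G\to H$ is a homomorphism with $|H|\le C$ and $H$ triangle-free. Since $d$ can be chosen large compared with $C$, and the sample is fine, a Borsuk--Ulam/Lovász-type argument (via the neighborhood or box complex of $H$) applies: the $\mathbb Z_2$-index of the complex attached to $G$ grows with $d$. This forces $H$ to contain a homomorphic image of a structure with odd girth $3$, namely a triangle, giving a contradiction. The main technical step on this side is setting up the equivariant map from $G$'s complex to $H$'s complex in a way that sees triangles in $H$, not just large chromatic number.

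\textbf{Upper bound.} Let $G$ have odd girth at least $7$ and $\delta(G)\ge(\frac19+\varepsilon)n$. I would apply Szemerédi regularity with parameter $\ll\varepsilon$ to get clusters $U_1,\dots,U_k$ and a reduced graph $R$. To each vertex $v$ I assign its \emph{type}: the set of clusters $U_j$ with $|N(v)\cap U_j|\ge \eta|U_j|$. There are at most $2^k$ types, and I let $H$ be the graph on types in which two types are adjacent if some edge of $G$ joins vertices of those types. Then $v\mapsto$ type of $v$ is a homomorphism, and it remains to prove $H$ is triangle-free.

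Suppose types $\tau_1,\tau_2,\tau_3$ form a triangle, witnessed by edges $x_1y_2$, $x_2y_3$, $x_3y_1$ with $x_i,y_i$ of type $\tau_i$. Each type $\tau_i$ corresponds to a set $S_i$ of clusters of total measure at least $\frac19+\varepsilon/2$. Regularity plus odd girth at least $7$ then gives the following: the common neighborhood structure of same-type vertices forces the clusters in $S_i$ and their reduced-graph neighborhoods to behave like the sets $N(v)$ and $N^2(v)$ of a single vertex. Chasing the edges, I aim to extract nine cluster families
\[
S_i,\quad N_R(S_i),\quad N_R^2(S_i)\setminus S_i\qquad (i=1,2,3)
\]
that must be pairwise (almost) disjoint. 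The reason is that any overlap would produce, via regular pairs and the counting lemma, a closed walk of length $3$ or $5$, hence a $C_3$ or $C_5$ in $G$. Each of these nine sets has measure exceeding $\frac19$, which is impossible.

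\textbf{Main obstacle.} I expect the disjointness bookkeeping in this last step to be the hard part. One must check each of the $\binom92$ overlaps and produce a short odd cycle of length at most $5$ from each. The place I would try first is reducing by symmetry to the overlaps between $S_i$ and $N_R^2(S_j)$, since these are the ones where the triangle in $H$ is genuinely used.
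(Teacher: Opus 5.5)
\textbf{Lower bound.} Your construction cannot work. Every edge runs between consecutive layers $V_i,V_{i+1}$ of a $9$-cycle, so $V_i\mapsto i$ is a homomorphism onto $C_9$, which is triangle-free with $9$ vertices, whatever $d$ is. This also rules out the claimed growth of a $\mathbb{Z}_2$-index, since such indices are monotone under homomorphisms; indeed your graph is $3$-colourable. In addition, with near-antipodal or near-identical adjacency at a small angle, a vertex sees only small caps in the neighbouring layers, so the degree ratio is far below $\frac19$.

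More fundamentally, a Borsuk--Ulam argument by itself only forces the image to have large chromatic number or index. That is compatible with being triangle-free (finite Borsuk graphs are examples), so the triangle must be built into the construction. The paper takes three copies $X_1,X_2,X_3$ of a finite $X\subseteq\mathbb{S}^d$ and joins them as the tensor product $Bor(d,\iota)\times K_3$. Suppose $\chi(Bor(d,\iota)[X])>N^3$ and $\phi$ maps the graph to some $J$ with at most $N$ vertices. The colouring $x\mapsto(\phi(x_1),\phi(x_2),\phi(x_3))$ is then not proper, so some Borsuk edge $xy$ has $\phi(x_h)=\phi(y_h)$ for all $h$, and $h\mapsto\phi(x_h)$ is a homomorphism $K_3\to J$.

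The density $\frac19$ comes from attaching a Hajnal-type gadget to each $X_h$. One part is a set $Y_h$ realising a near-optimal fractional colouring of $Bor(d,\theta)$ by independent sets. Then each $x\in X_h$ sees almost half of $Y_h$, while two vertices of $X_h$ with a common neighbour in $Y_h$ are at distance at most $\pi-\theta$; this is what excludes $C_5$. The other part is a set $Z_h$ of size $|Y_h|/2$ complete to $Y_h$. Each of the three pieces then has relative minimum degree close to $\frac13$.

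\textbf{Upper bound.} Here both the count and the quotient need changing.

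The family $N_R^2(S_i)\setminus S_i$ has no reason to be large. In the construction just described (at density just below $\frac19$), suppose the clusters inside $Y_h$ are not aligned with the caps. Then $S_h$ is essentially all of $Y_h$ (measure about $\frac29$), $N_R(S_h)$ is essentially $Z_h$, and $N_R^2(S_h)\setminus S_h$ is negligible. Nor are all overlaps excluded by $C_3,C_5$. A cluster in $N_R^2(S_1)\cap N_R^2(S_2)$ only gives a walk of length $6$ between vertices of types $\tau_1,\tau_2$, hence a closed odd walk of length $7$, which odd girth $7$ allows.

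The count that works uses six families: the supports $S_i$, each of measure at least about $2\delta$, and the sets $N_R(S_i)$, each of measure at least about $\delta$, all pairwise disjoint. This gives $9\delta\le 1$.
\begin{itemize}
\item The factor $2$ needs two vertices of the same type joined by a walk of length $3$ in $G$. They then have no common neighbour, so their neighbourhoods are disjoint inside every cluster of $S_i$.
\item Disjointness of $S_j$ from $N_R(S_{j'})$ needs $\bigcup_i S_i$ to span no reduced edge. For $j\neq j'$ this requires an even walk of length at most $2$ between vertices of types $\tau_j$ and $\tau_{j'}$.
\end{itemize}

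Your quotient by types does not supply such walks. A triangle in it is witnessed only by edges $x_1y_2,x_2y_3,x_3y_1$ with $y_i\neq x_i$. Linking two vertices of the same type in general costs a walk of length $4$ through a common cluster, so you only get odd closed walks of length about $15$.

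The paper instead quotients by a refinement in which two vertices share a part iff they have the same type and their neighbourhoods meet the same types. This still has boundedly many parts. A triangle in this quotient lifts to a walk of length $3$ between two vertices of one type, together with an edge and a walk of length $2$ between vertices of any two of the three types.

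You also need each part to be independent for the map sending $v$ to its type to be a homomorphism. The paper ensures this by splitting clusters along a bounded almost-dominating set, so that two vertices in a common cluster have a common neighbour.
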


Observe that $\delta_{\chi}(\{C_3, C_5\}) = 0$ is equivalent to $\delta_{\hom}(\{C_3, C_5\}, \emptyset) = 0$. 
It is then natural to wonder whether this statement can be strengthened at all, namely  whether there exists a proper monotone\footnote{A class of graphs is \emph{monotone} if it is closed under taking subgraphs. When an ambient monotone class $\mathcal G$ is specified, a monotone subclass $\mathcal C\subseteq\mathcal G$ is \emph{proper} if $\mathcal C\neq\mathcal G$.} class $\mathcal{C}$ of graphs such that the threshold for $\{C_3, C_5\}$-free graphs to have a homomorphism to a bounded-size graph in $\mathcal{C}$ is $0$. 
We show that this is not the case.

\begin{theorem}\label{thm:threshold-Kt}
    For every $t \geq 3$, we have $\delta_{\hom}(\{C_3, C_5\}, \{K_t\})  = \frac{1}{3t}$.
    Therefore, for every graph $H$, we have $\delta_{\hom}(\{C_3, C_5\}, \{H\}) > 0$.
\end{theorem}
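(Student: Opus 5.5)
The plan is to prove the two inequalities separately and then derive the ``therefore'' part. For the second sentence, fix any graph $H$. If $H$ has an odd cycle of length $3$ or $5$, then every $\{C_3,C_5\}$-free graph is itself $H$-free, but a bounded-size image is still required, so the threshold is at least the $\{C_3,C_5\}$ homomorphism threshold, which is $1/5>0$ by Letzter--Snyder. If $H$ has no such cycle, I reduce to cliques. For $t=|V(H)|$, every $K_t$-free graph is $H$-free, but this inclusion points the wrong way for monotonicity: having an $H$-free target is weaker than having a $K_t$-free target. So I argue directly with the lower-bound construction. The construction for $K_t$ will produce graphs $G$ whose bounded homomorphic images must contain huge complete structures, for instance a blow-up of $K_t$ or a Borsuk-type graph containing every bounded graph of the right odd girth. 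I will arrange that any bounded-size image contains some fixed-size subgraph that is universal for $\{C_3,C_5\}$-free graphs up to size $|H|$. A cleaner route is to note that $H$ is contained in a blow-up of some $K_s$, and that $K_s$-freeness is implied by $H$-freeness only after passing to blow-ups. I would therefore make the lower bound show that the image contains a large blow-up of $K_t$, giving threshold $\ge 1/(3t')$ for suitable $t'$.

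\textbf{Lower bound $\ge 1/(3t)$.} Generalize the Andr\'asfai circle construction to high-dimensional Borsuk graphs. Take vertices on the sphere $\mathbb S^d$ with $d$ large, and join near-antipodal points. Combine $t$ such structures, or take a product of a Borsuk graph with a $C_{\ge 7}$-type blow-up, so that the odd girth stays at least $7$ while the minimum degree is close to $n/(3t)$. The key topological step is a Borsuk--Ulam / Lov\'asz-type argument. A homomorphism of a fine discretization to a bounded graph $H'$ induces, by pigeonhole on small caps, a continuous $\mathbb Z_2$-equivariant structure. This forces $H'$ to have a large neighbourhood-complex connectivity, and by the structure of the construction (the $t$ ``layers'') to contain $K_t$.

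\textbf{Upper bound $\le 1/(3t)+\varepsilon$.} Apply the Szemer\'edi regularity lemma and take the reduced graph $R$. Vertices of $G$ with equal neighbourhood profiles across parts are then merged into a bounded image $H$. The main task is to show that, when the minimum degree exceeds $(1/(3t)+\varepsilon)n$, a suitably defined image avoids $K_t$. A $K_t$ in the image would yield $t$ vertex classes that are pairwise densely connected. Odd girth $\ge 7$ forces neighbourhoods and second neighbourhoods to be disjoint, in the spirit of the coloring argument in the introduction. Counting the sets $N(v_i)$, $N^2(v_i)$ for the $t$ classes, each roughly of size $\ge n/(3t)$ and all of them disjoint, would give more than $n$ vertices, a contradiction.

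I expect the main obstacle to be this upper bound. One has to choose the image map so that $K_t$ copies in the image genuinely correspond to disjoint $3t$ neighbourhood sets in $G$. I would first try defining the image by rounded density vectors of $N(v)$ over regularity parts, and declaring two such types adjacent if they are adjacent in $G$.
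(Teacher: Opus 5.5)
Your outline has genuine gaps in all three parts. The most concrete one is that your upper-bound count does not close. Taking $N(v_i)$ and $N^2(v_i)$ for the $t$ classes, each of size roughly $n/(3t)$, gives only $2t\cdot\frac{n}{3t}=\frac{2n}{3}<n$, so even full disjointness yields no contradiction.

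The missing idea is a factor of $2$ on the first-neighbourhood side, and it appears only at the level of regularity classes. Let $Q_j$ be a profile class; your rounded density vectors are the right first step, as in the paper. Let $\supp(Q_j)$ be the set of the $p$ regularity classes $V_i$ on which $Q_j$ has positive rounded density. Two vertices of $Q_j$ are joined in $G$ by a walk of length $3$, so they have no common neighbour (otherwise there is a closed $5$-walk). Hence they cannot both see more than half of any $V_i$, so every rounded density of $Q_j$ is at most $\frac12$. Summing gives $|\supp(Q_j)|\ge 2(\frac{1}{3t}+\varepsilon')p$, where $\varepsilon'>0$ absorbs the regularity errors. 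Let $\supp^2(Q_j)$ be the set of classes joined in $G'$ to $\supp(Q_j)$; then $|\supp^2(Q_j)|\ge(\frac{1}{3t}+\varepsilon')p$. All $2t$ sets are pairwise disjoint, which uses regularity of the dense pairs to build walks of length $3$ and $4$, so their total size exceeds $p$.

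These walks must also be lifted from the $K_t$ in the quotient. If types are adjacent merely when some edge of $G$ joins them, the edges of a triangle $Q_1Q_2Q_3$ may be witnessed by different vertices of $Q_2,Q_3$. Then no $3$-walk between two vertices of $Q_1$ is guaranteed. The paper refines the profile partition once more (same profile, and neighbours in the same profile classes) precisely so that such walks lift.

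For the lower bound, neighbourhood-complex connectivity of the target cannot by itself force even a triangle. Finite Borsuk graphs and Kneser graphs are triangle-free with arbitrarily large topological bounds. So you still need a mechanism that produces $K_t$, and you need to say where the minimum degree $\frac{1}{3t}$ comes from.

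The paper takes $t$ disjoint copies of a gadget $X_h\cup Y_h\cup Z_h$ of relative minimum degree about $\frac13$. The $X$--$Y$ incidence graph comes from a fractional colouring of $Bor(d,\theta)$ of weight about $2$, $Z$ is complete to $Y$ with $|Z|=|Y|/2$, and $|X|$ is negligible. The sets $X_h$ are then joined as in the tensor product $Bor(d,\iota)\times K_t$ with $\iota\ll\theta$. Odd girth at least $7$ holds for two reasons: $Bor(d,\iota)$ has large odd girth, and two vertices of $X_h$ with a common neighbour in $Y_h$ are not $\theta$-adjacent, while a $3$-walk through the product would make them $3\iota$-close to antipodal.

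Given a homomorphism $\phi$ to $J$ with $|V(J)|\le N$, colour $x\mapsto(\phi(x_h))_{h\in V(K_t)}\in[N]^t$. Since $\chi(Bor(d,\iota)[X])>N^t$ (Borsuk--Ulam for large $d$, plus de Bruijn--Erd\H{o}s for finite $X$), there are adjacent $x,y$ with equal colour vectors. Then $h\mapsto\phi(x_h)$ is a homomorphism $K_t\to J$, i.e.\ $K_t\subseteq J$.

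Finally, your deduction of the second sentence is wrong in both cases. If $H$ contains $C_3$ or $C_5$, then $\{C_3,C_5\}$-free targets are $H$-free, so comparing with $\delta_{\hom}(\{C_3,C_5\})=\frac15$ gives an \emph{upper} bound. Indeed, for $H=C_3$ the value is $\frac19$. In the other case your containment is reversed: $H\subseteq K_t$ means every $H$-free graph is $K_t$-free. So a bounded $H$-free image is a bounded $K_t$-free image, and $\delta_{\hom}(\{C_3,C_5\},\{H\})\ge\delta_{\hom}(\{C_3,C_5\},\{K_t\})=\frac{1}{3t}>0$ for $t=\max(3,|V(H)|)$. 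No blow-ups are needed.
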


\cref{thm:threshold-Kt} showcases a surprising contrast between the threshold for the existence of homomorphisms to bounded-size graphs and the threshold for the existence of homomorphisms to bounded-size graphs from a proper monotone class for $\{C_3, C_5\}$-free graphs.
Indeed, for $\{C_3,C_5\}$-free graphs, there is a sharp qualitative change between merely forcing a bounded homomorphic image and forcing such an image to satisfy any fixed nontrivial monotone restriction.

The result of Thomassen was extended by Gishboliner, Hurley and Wigderson \cite{GHW26}, who showed that for every $k \geq 2$, $\delta_{\hom}(\{C_3, C_5, \ldots, C_{2k+1}\}, \{C_3, \ldots, C_{2k-3}\}) = 0$.
They conjectured that this result could be strengthened to $\delta_{\hom}(\{C_3, C_5, \ldots, C_{2k+1}\}, \{C_3, \ldots, C_{2k-3}, C_{2k-1}\}) = 0$.
Again, it is natural to relax their statement to ask whether there exists a proper monotone subclass $\mathcal{C}$ of the class of graphs of odd girth at least $2k-1$ such that the threshold for graphs of odd girth at least $2k+3$ to have a homomorphism to a bounded-size graph in $\mathcal{C}$ is $0$.
We also refute this more general statement.

\begin{restatable}{theorem}{thresholdoddgirth}\label{thm:threshold-odd-girth}
    For every $k \geq 2$, for every graph $H$ of odd girth at least $2k-1$, we have \[\delta_{\hom}(\{C_3, C_5, \ldots, C_{2k+1}\}, \{C_3, \ldots, C_{2k-3}, H\}) > 0.\]
\end{restatable}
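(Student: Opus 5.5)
We prove the statement by a lower-bound construction based on high-dimensional Borsuk graphs. Fix $k\ge 2$ and a graph $H$ of odd girth at least $2k-1$; we may assume $H$ is non-bipartite, since otherwise an edge already contains a copy of $H$ after blowing up and the claim follows from \cref{thm:threshold-triangle}-type arguments. We will exhibit a dimension $d=d(H)$ and an angle $\alpha=\alpha(k)$, and we will take $c_0>0$ to be the normalized measure of a spherical cap of angular radius $\alpha$ on $\mathbb S^d$. The goal is to show that every $c<c_0$ lies below the threshold, so the threshold is at least $c_0>0$. The dependence of $d$ on $H$ is harmless, because the statement only asks for positivity for each fixed $H$.

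\textbf{Step 1: the base graph.} Let $B=B(d,\alpha)$ be the graph on $\mathbb S^d$ in which $u\sim w$ whenever the angle between $u$ and $w$ exceeds $\pi-\alpha$. A closed walk of odd length $\ell$ in $B$ moves between near-antipodal points, so $\ell\alpha\ge\pi$. Taking $\alpha<\pi/(2k+1)$ therefore gives odd girth at least $2k+3$. Sampling many roughly uniform points, or taking a fine equi-distributed net, gives $n$-vertex induced subgraphs $G_n$ with minimum degree at least $(c_0-o(1))n$. It remains to show that no bounded-size graph $F$ of odd girth at least $2k-1$ and without a copy of $H$ receives homomorphisms from all $G_n$.

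\textbf{Step 2: from homomorphisms to a measurable colouring.} Suppose each $G_n$ maps to a fixed $F$ on $M$ vertices. By compactness, passing to a limit of the empirical measures of the colour classes, we obtain a measurable partition $\mathbb S^d=\bigsqcup_{v\in V(F)}A_v$ up to null sets. Whenever $u\in A_v$, $w\in A_{v'}$ and $u,w$ are $\alpha'$-near-antipodal (for $\alpha'$ slightly smaller than $\alpha$) on a positive-measure set, we have $vv'\in E(F)$. Thus $F$ contains the ``measurable Borsuk quotient'' of the colouring. From this point the combinatorics of the $G_n$ disappears and we work with the partition.

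\textbf{Step 3: forcing $H$ inside $F$.} This is the main obstacle. A purely topological argument, via box-complex coindex and Borsuk--Ulam, only shows that $F$ has large topological lower bound on its chromatic number. Graphs of odd girth at least $2k-1$ with arbitrarily large coindex exist, so this alone cannot force $H$. We must instead exploit the gap between the source odd girth $2k+3$ and the target odd girth $2k-1$.

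The first step here is a rigidity lemma. Odd girth at least $2k-1$ in $F$, combined with the geometry of $B$ at scale $\alpha\approx\pi/(2k+1)$, should force the colouring to be ``locally continuous'': apart from a small exceptional set, each $A_v$ has density close to $0$ or $1$ in caps of radius comparable to $\alpha$. To see why, a density point of two classes $A_v,A_{v'}$ in one small cap can be joined to itself by near-antipodal walks of odd length $2k+1$. Such a walk has slack angle, and it produces an odd closed walk of length at most $2k-3$ through $v,v'$ in $F$, a contradiction.

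With local continuity in hand, the partition induces a homomorphism from a Borsuk graph $B(d,\beta)$ at a coarser scale $\beta\approx\pi/(2k-1)$ into $F$. The final step is to embed $H$ into $B(d,\beta)$ via an odd-girth-respecting geometric realization; this is where $d$ must be taken large in terms of $H$. Concretely, we place the vertices of $H$ on the sphere so that every edge is $\beta$-near-antipodal, using a random or generic embedding in high dimension. Composing this embedding with the homomorphism $B(d,\beta)\to F$ yields a homomorphic, and after a genericity perturbation injective, copy of $H$ in $F$, which is the desired contradiction.

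We expect the rigidity lemma to be the hardest part, and in particular quantifying how the slack between $2k+3$ and $2k-1$ forces monochromatic caps. Our first attempt would be a density-increment argument on caps combined with Lebesgue density.
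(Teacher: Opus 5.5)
\textbf{The construction itself fails, regardless of how Step~3 is carried out.}

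Fix $d$ and $\alpha<\pi/(2k+1)$, and choose $\epsilon>0$ with $(2k+1)(\alpha+\epsilon)<\pi$. Partition $\mathbb S^d$ into finitely many cells of diameter less than $\epsilon$. Let $F$ be the graph on the cells in which two cells are adjacent if they contain points at angular distance more than $\pi-\alpha$.
\begin{itemize}
\item Mapping each point to its cell is a homomorphism from every $G_n$ into $F$.
\item $F$ is loopless with odd girth at least $2k+3$. Along an odd closed walk of length $\ell$ in $F$, the chosen representatives alternate between lying near $a_0$ and near $-a_0$, which forces $\pi<\ell(\alpha+\epsilon)$.
\end{itemize}
So all your $G_n$ map into one fixed finite graph with no odd cycle of length at most $2k+1$. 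Whenever $H$ contains such a cycle, $F$ is $\{C_3,\dots,C_{2k-3},H\}$-free and your family witnesses nothing. This covers $H=C_{2k-1}$, and also the graphs $\Gamma_{k-2}^t$, which by universality are the cases that really matter.

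The same $F$ refutes your rigidity lemma. Its colour classes are cells much smaller than $\alpha$, so no $\alpha$-cap is nearly monochromatic. Moreover, no homomorphism $B(d,\beta)\to F$ with $\beta>\pi/(2k+1)$ can exist, since such a $B(d,\beta)$ contains $C_{2k+1}$.

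Your final embedding step also fails on its own. For $k=2$ and $\beta\approx\pi/3$, four unit vectors cannot have pairwise inner products at most about $-1/2$, so $K_4$ does not embed in $B(d,\beta)$.

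\textbf{Root cause and the missing idea.} To defeat every target on $N$ vertices, the dimension must grow with $N$, so that Borsuk--Ulam beats the available colours. But your degree ratio $c_0$ is the normalized measure of an $\alpha$-cap in $\mathbb S^d$, which tends to $0$ as $d\to\infty$. Saying that $d$ depends only on $H$ is exactly what is not allowed. The paper avoids this by decoupling density from the Borsuk graph.
\begin{itemize}
\item \emph{Density gadget.} The Borsuk graph lives on a vertex set $X$ of negligible size. Since $\chi_f(Bor(d,\theta))\to 2$ as $\theta\to 0$ for every $d$, the incidence graph between $X$ and a weighted family $Y$ of independent sets gives each $x\in X$ almost $|Y|/2$ neighbours. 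A set $Z$ of size $|Y|/2$ complete to $Y$ then brings all degree ratios to about $1/3$, independently of $d$.
\item \emph{Gluing along a template.} One copy of this gadget is taken for each vertex of $\Gamma=\Gamma_{k-2}^t$, and the $X$-parts are joined as in $Bor(d,\iota)\times\Gamma$. This gives odd girth at least $og(\Gamma)+4=2k+3$ and degree ratio $\frac{1}{3|V(\Gamma)|}-\varepsilon$.
\item \emph{Extracting $\Gamma$.} Given a homomorphism $\phi$ into $J$ with $|V(J)|\le N$, apply Borsuk--Ulam to the colouring $x\mapsto(\phi(x_h))_{h\in V(\Gamma)}$, which uses at most $N^{|V(\Gamma)|}$ colours. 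This yields a monochromatic Borsuk edge $xy$, hence a homomorphism $h\mapsto\phi(x_h)$ from $\Gamma$ to $J$.
\item \emph{Finishing.} Any two vertices of $\Gamma_{k-2}^t$ lie on a common $(2k-1)$-cycle. Hence any homomorphic image of it with odd girth at least $2k-1$ contains it as a subgraph. Every $H$ of odd girth at least $2k-1$ is a subgraph of some $\Gamma_{k-2}^t$, bipartite or not, so your opening reduction of the bipartite case is both unjustified and unnecessary.
\end{itemize}
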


In \cref{thm:threshold-Kt}, we are able to pinpoint the actual threshold for cliques. 
We then derive the result using that the family of cliques is \emph{universal}: every graph $H$ is a subgraph of some clique.
For $k \geq 2$, the class of \emph{$(k-2)$-fold Mycielskians} is a class of graphs of odd girth at least $2k-1$ that is universal for the class of graphs of odd girth at least $2k-1$.
To prove \cref{thm:threshold-odd-girth}, for every $k \geq 2$ and every $(k-2)$-fold Mycielskian $\Gamma$, we in fact establish that $\delta_{\hom}(\{C_3, C_5, \ldots, C_{2k+1}\}, \{C_3, \ldots, C_{2k-3}, \Gamma\})  = \frac{1}{3|V(\Gamma)|}$, and then immediately derive \cref{thm:threshold-odd-girth} from there.

\subsection{Proof sketch of the lower bound}
In our opinion, the most surprising finding in this paper is the lower bound in \cref{thm:threshold-triangle}, and specifically the counterexample to the conjecture of \cite{GHW26} that $\delta_{\hom}(\{C_3,C_5\},\{C_3\})=0$. As such, we end this introduction by sketching a proof of the lower bound $\delta_{\hom}(\{C_3,C_5\},\{C_3\})\geq \frac 19$; the proof of the more general \cref{thm:threshold-odd-girth} follows the same approach, albeit with more technical complications to obtain the general result. We remark that the proof of the matching upper bound $\delta_{\hom}(\{C_3,C_5\},\{C_3\})\leq \frac 19$ is also inspired by the structure of our lower-bound construction, and the proof of the upper bound essentially ``uncovers'' this structure. Thus, while this is by no means the unique construction witnessing $\delta_{\hom}(\{C_3,C_5\},\{C_3\})\geq \frac{1}{9}$, it is the case that all such constructions must be of a similar form.

The key object in the construction is the \emph{Borsuk graph} $Bor(d,\theta)$. This is an infinite graph, whose vertex set is the $d$-dimensional unit sphere $\mathbb S^d$, and in which two vertices $u,v$ are adjacent if and only if $\dist(u,v)\geq \pi-\theta$, where $\dist$ denotes the angular distance between $u$ and $v$. The main property we require of the Borsuk graph (for now) is that $\chi(Bor(d,\theta))\geq d+2$ for any $d,\theta$; this is a simple but important consequence of the Borsuk--Ulam theorem \cite{B33} (and the bound $d+2$ is optimal for all sufficiently small $\theta$). Although $Bor(d,\theta)$ is an infinite graph, one can approximate it by finite graphs by taking the induced subgraph on a sufficiently fine $\varepsilon$-net; in this high-level proof overview we do not dwell on this point.

Before presenting our construction, let us first present the construction of Hajnal showing that $\delta_\chi(C_3)\geq \frac 13$. More precisely, we will discuss a variant of Hajnal's construction due to \L uczak and Thomass\'e \cite{1007.1670}, which is based on Borsuk graphs. We define a graph $G_0$ on vertex parts $X,Y,Z$
as follows. First, $X$ is a copy of $Bor(d,\theta)$ for some large $d$ and small $\theta$, to be chosen later, while $Y$ and $Z$ are independent sets. Between $Y$ and $Z$ we put a complete bipartite graph, while between $X$ and $Y$ we put a ``hemisphere graph'': we identify the vertices of $Y$ with $\mathbb S^d$ as well, and we join $x \in X$ to $y \in Y$ if and only if $\dist(x,y) \leq \frac{\pi}2-\theta$. That is, for any $y \in Y$, its neighborhood in $X$ is essentially the hemisphere centered at $y$; more precisely, it is the spherical cap centered at $y$ which is $\theta$-far from being a full hemisphere. 

Since $Bor(d,\theta)$ is an induced subgraph of $G_0$, we see that $\chi(G_0)\geq d+2$. Hence, by picking $d$ appropriately, we can ensure that $\chi(G_0)$ is arbitrarily large. Moreover, it is not hard to check that $G_0$ is triangle-free for sufficiently small $\theta$: the key point here is that if $x,x' \in X$ are both adjacent to some $y \in Y$, then they both lie in the spherical cap around $y$, and hence their distance must be at most $\pi-2\theta$ by the triangle inequality, and thus they are non-adjacent in $Bor(d,\theta)\cong G_0[X]$. Note too that, as $\theta\to 0$, the edge density between $X$ and $Y$ tends to $\frac 12$, since the spherical cap approaches a hemisphere and hence picks up almost half the mass of the sphere. The final step is now to duplicate some vertices of $G_0$ in order to obtain the desired minimum degree. Namely, we duplicate vertices appropriately to ensure that roughly one-third of the vertices lie in $Z$ and roughly two-thirds of the vertices lie in $Y$ (and $X$ has a negligible fraction of the vertices). In this way, every vertex is adjacent to almost one-third of the other vertices, completing the proof that $\delta_\chi(C_3)\geq \frac 13$. 

Our construction for the lower bound on $\delta_{\hom}(\{C_3,C_5\},\{C_3\})$ can be viewed as ``gluing together'' three copies of $G_0$. Namely, let us define a graph $G$ on vertex sets $X_1,X_2,X_3,Y_1,Y_2,Y_3,Z_1,Z_2,Z_3$. As before, we make each $(Y_i,Z_i)$ complete bipartite, and each $(X_i,Y_i)$ a ``hemisphere graph''. The main twist is that now each $X_i$ is also an independent set, rather than a Borsuk graph. Instead, we place the tripartite cover of the Borsuk graph on the vertices $X_1 \cup X_2 \cup X_3$. That is, for all $i \neq j$ and all $x \in X_i, x' \in X_j$, we make $xx'$ an edge of $G$ if and only if $\dist(x,x')\geq \pi-\theta$. By duplicating vertices as above, we can ensure that the minimum degree of $G$ is arbitrarily close to $\frac 19 |V(G)|$ (just because the minimum degree in each $X_i \cup Y_i \cup Z_i$ is one-third, and there are three such parts). It is also not hard to check that $G$ is $\{C_3,C_5\}$-free; the key point now is that for any $y \in Y_1$ (say), any two of its neighbors in $X_1$ cannot be joined by a path of length $3$, as any two vertices in $X_1$ at distance $3$ must still be nearly antipodal, and no two elements of a spherical cap are nearly antipodal.

So it remains to prove that $G$ has no bounded-size triangle-free homomorphic image. For contradiction, suppose that $\varphi:G \to H$ is a homomorphism, where $H$ is some triangle-free graph. Let $x,x' \in X_1$ with $\varphi(x)=\varphi(x')$. We note that $x,x'$ cannot be joined by a path of length $3$ in $G$, for if they were, the image of this path under $\varphi$ would yield a triangle in $H$. Hence, if $\varphi(x)=\varphi(x')$, then $\dist(x,x')\leq \pi-3\theta$. That is, $\varphi$ defines a proper coloring of $Bor(d,3\theta)$ with $|H|$ colors, and hence $|H|\geq \chi(Bor(d,3\theta))\geq d+2$. Thus, by picking $d$ appropriately, we can rule out triangle-free homomorphic images of $G$ of any fixed size, as claimed.

\paragraph*{Acknowledgments.} Part of this work was carried out while the first and third authors were visiting the Institute for Theoretical Studies at ETH Z\"urich. We thank them for providing a wonderful and productive working environment. 

\paragraph*{AI Disclosure.} We used assistance from ChatGPT 5.5 Pro, which helped us discover 
the proof for the upper bound in Theorem~\ref{thm:threshold-triangle}. All remaining proofs and results are due to the authors. Moreover, this paper, including all its proofs, is fully written by the (human) authors.

\section{Non-existence of homomorphisms}

In this section, we prove the lower bounds of \cref{thm:threshold-Kt} and \cref{thm:threshold-odd-girth}.
For this, we need some preparation.

\subsection{Generalized Mycielskians}

For an integer $n$, we denote by $[n]$ the set $\{1, \ldots, n\}$.
Given an integer $k \geq 0$ and a graph $\Gamma$, its \emph{$k$-fold Mycielskian} is the graph $M_k(\Gamma)$ defined as follows: \begin{enumerate}
    \item $V(M_k(\Gamma)) = (V(\Gamma) \times [k+1]) \cup \{r\}$. That is, the vertices of $M_k(\Gamma)$ are of the form $(v, i)$ for $v \in V(\Gamma)$ and $i \in [k+1]$, as well as a single fresh vertex $r$.
    \item The induced subgraph on $V(\Gamma) \times \{k+1\}$ is a copy of $\Gamma$ and $V(\Gamma) \times \{i\}$ is an independent set for all $i \in [k]$.
    \item $r$ is adjacent to all vertices in $V(\Gamma) \times \{1\}$ and no other vertices.
    \item For all $i \in [k]$, each vertex $(v, i)$ is adjacent to all other vertices $(w, i+1)$ such that $vw \in E(\Gamma)$ and to no other vertices of $V(\Gamma) \times \{i+1\}$.
\end{enumerate}

The sequence of $k$-fold Mycielskians is the sequence $(\Gamma_k^t)_{t \geq 2}$ where $\Gamma_k^2 = K_2$ and $\Gamma_k^{t+1} = M_k(\Gamma_k^t)$ for every $t \geq 2$.

We now review some basic properties of $(\Gamma_k^t)_{t \geq 2}$.
The first one was proved by Stiebitz \cite{S85} when he introduced generalized Mycielskians.
To the best of our knowledge, the second was first proved by Cropper, Gy\'arf\'as and Lehel \cite{CGL06} for Mycielskians. For completeness, we include a proof of the result for generalized Mycielskians.

\begin{lemma}[\cite{S85}]\label{lem:og-Myc}
    Let $k \geq 0$ and let $G$ be a graph.
    If $G$ has odd girth $2k+3$, so does $M_k(G)$.
    In particular, for every $t \geq 3$, $\Gamma_k^t$ has odd girth $2k+3$.
\end{lemma}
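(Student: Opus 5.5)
We will show two things: $M_k(G)$ has no odd cycle of length less than $2k+3$, and it does have one of length exactly $2k+3$. The second part is immediate. $G$ itself has an odd cycle of length $2k+3$, and $G$ is an induced subgraph of $M_k(G)$ as the layer $V(G)\times\{k+1\}$. The \emph{in particular} statement then follows by induction on $t$. For the base case, $\Gamma_k^3 = M_k(K_2)$: with $V(K_2)=\{a,b\}$ this graph is the path $r,(a,1),(b,2),\dots,(a,k+1),(b,k+1),(a,k),\dots,(b,1),r$ closed up (the parities work out), which is a cycle of length $2k+3$. So $\Gamma_k^3 = C_{2k+3}$, and the lemma then propagates the odd girth $2k+3$ to all later terms. (We cannot apply the lemma directly to $K_2$, which has odd girth $\infty$, so we start from $t=3$.)

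For the main part, take a shortest odd cycle $C$ in $M_k(G)$ and suppose $|C| \le 2k+1$. Consider the projection $\pi:(v,i)\mapsto v$. Every edge not incident to $r$ maps to an edge of $G$: edges inside layer $k+1$ are edges of $G$, and edges between layers $i$ and $i+1$ join $(v,i)$ to $(w,i+1)$ with $vw\in E(G)$.

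\emph{Case 1: $C$ avoids $r$.} Then $\pi(C)$ is a closed walk in $G$ of the same odd length. Any odd closed walk contains an odd cycle of at most its length, so $G$ would have an odd cycle of length at most $2k+1$, a contradiction.

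\emph{Case 2: $C$ passes through $r$.} Write $C = r,(u,1),\dots,(w,1),r$. Removing $r$ leaves a path $P$ of odd length $|C|-2$ between two vertices of layer $1$. Define the level function $\ell(v,i)=i$. Each edge of $P$ either changes the level by exactly $1$ or stays within layer $k+1$. The subwalk $P$ leaves level $1$ and returns to it, and $|C| \le 2k+1$ means $|P| \le 2k-1$. To reach layer $k+1$ and come back takes at least $2k$ steps, so $P$ never reaches layer $k+1$. Therefore every edge of $P$ changes the level by $\pm1$, and a closed-level walk of this kind has even length. This contradicts $|P|$ being odd.

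Hence there is no odd cycle of length at most $2k+1$, and together with the exhibited $(2k+3)$-cycle the odd girth is exactly $2k+3$.

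The only delicate point is the parity and level counting in Case 2. Case 1 needs only the standard fact that odd closed walks contain short odd cycles.
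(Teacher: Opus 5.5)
Your proof is correct. The paper gives no argument of its own for this lemma and simply cites Stiebitz \cite{S85}, so there is no paper proof to compare against.

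Your two cases are both sound:
\begin{itemize}
\item Odd cycles avoiding $r$ project, via $(v,i)\mapsto v$, to odd closed walks of the same length in $G$.
\item Odd cycles through $r$ have odd length. Your level count shows such a cycle must climb $k$ steps to layer $k+1$, use at least one edge inside that layer, and descend $k$ steps, so its length is at least $2k+3$.
\end{itemize}
Together these give $og(M_k(G))\ge \min(og(G),2k+3)$, and the induced copy of $G$ in layer $k+1$ gives the matching upper bound.

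The only cosmetic slip is in the explicit Hamiltonian cycle of $\Gamma_k^3=M_k(K_2)$. The displayed labels $(a,k+1),(b,k+1),(a,k)$ are right for even $k$. For odd $k$ the upward path arrives at $(b,k+1)$, and the labels near the top swap. Your parity remark already covers this.
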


\begin{lemma}\label{lem:Myc-universal}
    Let $k \geq 0$ and let $G$ be a graph of odd girth greater than $2k+1$.
    Then, there exists $t \geq 2$ such that $G$ is a subgraph of $\Gamma_k^t$.
\end{lemma}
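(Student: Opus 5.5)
The plan is to induct on $|V(G)|$, adding one vertex at a time and using exactly one extra Mycielskian step per added vertex. For the base case, a graph on at most one vertex is a subgraph of $\Gamma_k^2=K_2$. For the inductive step, let $G$ have odd girth greater than $2k+1$ and fix any vertex $v$. The graph $G-v$ is a subgraph of $G$, so it also has odd girth greater than $2k+1$. By induction there is an injective homomorphism $\varphi\colon G-v\to\Gamma$, where $\Gamma=\Gamma_k^t$ for some $t$. I will then embed $G$ into $M_k(\Gamma)=\Gamma_k^{t+1}$.

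The embedding is built from the distance layers around $v$. Let $L_i$ be the set of vertices at distance exactly $i$ from $v$. The first step is to show that for $1\le i\le k$ the set $L_i$ is independent. Indeed, an edge $uw$ inside $L_i$, together with shortest paths from $v$ to $u$ and from $v$ to $w$, gives a closed walk of odd length $2i+1\le 2k+1$. An odd closed walk always contains an odd cycle of at most that length, which contradicts the odd girth assumption. Moreover, as always with distance layers, every edge lies inside a single layer or joins two consecutive layers.

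Now define $\psi\colon V(G)\to V(M_k(\Gamma))$ as follows:
\begin{itemize}
\item $\psi(v)=r$;
\item $\psi(u)=(\varphi(u),i)$ for $u\in L_i$ with $1\le i\le k$;
\item $\psi(u)=(\varphi(u),k+1)$ for every other vertex $u$, that is, for vertices at distance at least $k+1$ from $v$ or in other components.
\end{itemize}
The map $\psi$ is injective because $\varphi$ is injective and $r$ is a fresh vertex.

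It remains to check that $\psi$ sends edges to edges.
\begin{itemize}
\item Edges $vu$ have $u\in L_1$, and $r$ is adjacent to all of $V(\Gamma)\times\{1\}$.
\item An edge $uw$ with $u\in L_i$, $w\in L_{i+1}$ and $1\le i\le k$ is an edge of $G-v$, so $\varphi(u)\varphi(w)\in E(\Gamma)$. Hence $(\varphi(u),i)(\varphi(w),i+1)$ is an edge of $M_k(\Gamma)$. This case also covers the edges between $L_k$ and the top layer.
\item An edge between two vertices at distance at least $k+1$ is sent into the copy of $\Gamma$ on $V(\Gamma)\times\{k+1\}$, where it is preserved by $\varphi$.
\end{itemize}
By the layer structure above, these cases exhaust all edges of $G$.

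So $G$ is a subgraph of $\Gamma_k^{t+1}$, which completes the induction. The only step requiring real care is the independence of the layers $L_1,\dots,L_k$, and that is precisely where the hypothesis on the odd girth is used.
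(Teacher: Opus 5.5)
Your proof is correct and follows the paper's argument essentially verbatim. You induct on $|V(G)|$, send the deleted vertex $v$ to the apex $r$ of $M_k(\Gamma)$, and place each other vertex $u$ on level $\min(\dist_G(v,u),k+1)$; the odd girth hypothesis is used exactly to make the layers $L_1,\dots,L_k$ independent, and your explicit edge check fills in what the paper calls straightforward.
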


\begin{proof}
    We prove by induction on the number of vertices of $G$ that $G$ is a subgraph of $\Gamma_k^{|V(G)|}$.
    The statement is trivial if $|V(G)|=2$.
    Suppose now that the statement holds for all graphs on some number $n$ of vertices, and let a graph $G$ with $|V(G)| = n+1$ and odd girth greater than $2k+1$.
    Let us pick any $v \in V(G)$.
    Then $G-v$ is a subgraph of $\Gamma_k^n$ by the induction hypothesis so there exists an embedding $\psi : V(G-v) \to V(\Gamma_k^n)$ that witnesses it.
    We define an embedding $\phi : V(G) \to V(\Gamma_k^{n+1})$ as follows. \begin{itemize}
        \item $\phi(v) = r$.
        \item For every $u\in V(G)\setminus \{v\}$, if $\dist_G(v, u) \leq k$, then define $\phi(u) := (\psi(u), \dist_G(v, u))$.
        \item Otherwise, define $\phi(u) := (\psi(u), k+1)$.
    \end{itemize}
    Since $G$ has odd girth greater than $2k+1$, it follows that for every $i \in [k]$, the set of vertices at distance $i$ from $v$ is an independent set in $G$. It is then straightforward to verify that $\phi$ witnesses that $G$ is a subgraph
    of $\Gamma_k^{n+1}$.
\end{proof}
The next lemma is a simple observation about the existence of walks of prescribed length between different vertices of generalized Mycielskians.
\begin{lemma}\label{lem:exists-walk}
    Let $k \geq 1$ and let $\Gamma$ be a graph.
    Let $u, v \in V(\Gamma)$ such that there is a walk of length $\ell$ between $u$ and $v$ in $\Gamma$.
    Let $i \leq j \in [k+1]$.
    There exists a walk of length $\ell$ in $M_k(\Gamma)$ between $(u, i)$ and $(v, j)$ if one of the following holds: \begin{itemize}
        \item $\ell$ and $j-i$ have the same parity and $\ell \geq j-i$, or
        \item $\ell \geq 2k+2-(i+j)$.
    \end{itemize}
\end{lemma}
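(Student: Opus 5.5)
We construct the walk explicitly from a walk $u=u_0,u_1,\dots,u_\ell=v$ in $\Gamma$. The only fact about $M_k(\Gamma)$ we need is this: whenever $ww'\in E(\Gamma)$ and $a,b\in[k+1]$ with $|a-b|=1$, or $a=b=k+1$, the pair $(w,a),(w',b)$ is an edge of $M_k(\Gamma)$. Hence any sequence of levels $a_0,a_1,\dots,a_\ell$ with $a_0=i$, $a_\ell=j$, where each step changes the level by $\pm1$ or stays at $k+1$, gives a walk $(u_0,a_0),(u_1,a_1),\dots,(u_\ell,a_\ell)$ of length $\ell$ from $(u,i)$ to $(v,j)$. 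So the lemma reduces to a combinatorial problem about such level sequences of length exactly $\ell$. We never use the vertex $r$.

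\emph{First case}: $\ell\ge j-i$ and $\ell\equiv j-i \pmod 2$. Write $\ell=(j-i)+2m$ with $m\ge 0$. We choose levels that go up from $i$ to $j$ one step at a time, using $j-i$ steps, and then spend the remaining $2m$ steps oscillating between two adjacent levels, $m$ times.
\begin{itemize}
\item If $j\ge 2$, we oscillate between $j$ and $j-1$.
\item If $j=1$, then $i=j=1$, and we oscillate between $1$ and $2$. This needs $k\ge1$, so that level $2$ exists.
\end{itemize}
Each step changes the level by exactly one, so the level sequence is valid.

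\emph{Second case}: $\ell\ge 2k+2-(i+j)$. Note that $(k+1-i)+(k+1-j)=2k+2-(i+j)$. We go up from $i$ to $k+1$ using $k+1-i$ steps, then stay at level $k+1$ for the remaining
\[
s=\ell-(2k+2-i-j)\ge 0
\]
steps, and finally go down from $k+1$ to $j$ using $k+1-j$ steps. Staying at level $k+1$ is legal because $V(\Gamma)\times\{k+1\}$ induces a copy of $\Gamma$. This is exactly where the parity constraint disappears: the top level absorbs any number of steps, odd or even.

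The construction is routine, and the only point needing care is the boundary bookkeeping:
\begin{itemize}
\item Levels must stay within $[k+1]$. In the first case this uses $k\ge1$ when $j=1$.
\item The case $s=0$ is allowed.
\item The degenerate cases $i=k+1$ or $j=k+1$ are handled correctly: the up-phase or the down-phase is then empty.
\end{itemize}

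We double-check that an edge between $(w,a)$ and $(w',a+1)$ exists regardless of whether $a+1=k+1$. This holds because rule (4) of the definition applies for all $a\in[k]$.
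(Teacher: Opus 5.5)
Your proof is correct and follows essentially the same route as the paper. Both lift the walk in $\Gamma$ level by level: in the first case you climb from $i$ to $j$ and then oscillate between $j$ and an adjacent level (the paper takes any $j'\in\{j-1,j+1\}\cap[k+1]$, which exists since $k\ge 1$), and in the second case you climb to level $k+1$, spend the surplus steps inside the copy of $\Gamma$ there, and descend to $j$.
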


\begin{proof}
    Let $u = u_0 u_1 \ldots u_{\ell} = v$ be a walk of length $\ell$ between $u$ and $v$ in $\Gamma$.
    Suppose first that $\ell$ and $j-i$ have the same parity and $\ell \geq j-i$.
    Since $k \geq 1$, there exists $j' \in \{j-1, j+1\} \cap [k+1]$.
    Then, $(u, i)(u_1, i+1) \ldots(u_{j-i}, j)(u_{j-i+1}, j')(u_{j-i+2}, j)\ldots(u_{\ell-1}, j')(v, j)$ is a walk of length $\ell$ in $M_k(\Gamma)$ between $(u, i)$ and $(v, j)$. To see that this walk is well-defined, we use that $\ell \geq j-i$, and that $\ell$ and $j-i$ have the same parity (so that after the $\ell - (j-i)$ last steps we are indeed back at $(v, j)$ and not at $(v, j')$).

    Suppose now that $\ell \geq 2k+2-(i+j)$.
    Then, $(u, i)(u_1, i+1)\ldots(u_{k+1-i}, k+1)(u_{k+2-i}, k+1)\ldots(u_{j+\ell-k-1}, k+1)(u_{j+\ell-k}, k)\ldots(u_{\ell-1}, j+1)(v, j)$ is a walk of length $\ell$ in $M_k(\Gamma)$ between $(u, i)$ and $(v, j)$. 
    To see that this walk is well-defined, we need $k+1-i \leq j+\ell - k - 1$, i.e. $\ell \geq 2k+2-(i+j)$.
\end{proof}
Finally, we need the following lemma, which shows that generalized Mycielskians are ``rich'' in closed walks of length $2k+3$.
\begin{lemma}\label{lem:exists-closed-walk-Myc}
    Let $k \geq 0$ and let $\Gamma$ be a graph such that any two vertices of $\Gamma$ belong to a closed walk of length $2k+3$.
    Then any two vertices of $M_k(\Gamma)$ belong to a closed walk of length $2k+3$.
\end{lemma}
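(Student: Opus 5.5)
The plan is a case analysis on the two vertices, building each closed walk in $M_k(\Gamma)$ from one of two ingredients. The first is \cref{lem:exists-walk}, which lifts walks of $\Gamma$. The second is padding: a walk of length $\ell$ between $u$ and $v$ extends to one of length $\ell+2m$ by backtracking along any edge. Every vertex of $\Gamma$ lies on a closed walk of length $2k+3$, so it has a neighbour and backtracking is always possible. The hypothesis on $\Gamma$ supplies, for $u,v\in V(\Gamma)$, a closed walk $W$ of length $2k+3$ through both. Splitting $W$ at $u$ and $v$ gives two $u$--$v$ walks of lengths $a$ and $b$ with $a+b=2k+3$, so exactly one of $a,b$ is odd. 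After padding, this gives a $u$--$v$ walk in $\Gamma$ of every length $\geq a$ with the parity of $a$, and of every length $\geq b$ with the parity of $b$. In particular $\Gamma$ has $u$--$v$ walks of both parities of every length $\geq 2k+3$.

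I would treat $k=0$ separately, since \cref{lem:exists-walk} needs $k\geq 1$. Here $r$ is adjacent to every vertex of the single copy of $\Gamma$. Pairs inside $\Gamma$ are handled directly by the hypothesis. For a pair $r,(u,1)$, take a neighbour $w$ of $u$; then $r\,(u,1)\,(w,1)\,r$ is a triangle. From now on assume $k\geq 1$.

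\emph{Pairs involving $r$.} For the pair $r,(v,j)$, I would take $u$ to be a neighbour of $v$ in $\Gamma$ and use the walk $r\,(u,1)\,\cdots\,(v,j)\,\cdots\,(u',1)\,r$. The middle segment is a walk of length $2k+1$ from $(u,1)$ to $(u',1)$ passing through $(v,j)$. I would build it by going up the levels from $(u,1)$ to $(v,j)$ in $j-1$ steps, then continuing up to level $k+1$ and back down to level $1$. Inside the top copy of $\Gamma$, which is $V(\Gamma)\times\{k+1\}$, a single step changes the parity, so the total length $2k+1$ is odd as required. The count works because going up to level $k+1$ and down to level $1$ takes $2k$ level changes, and one horizontal step in the top copy brings the total to $2k+1$. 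The pair $r,r$ is immediate.

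\emph{Two level vertices.} For $(u,i)$ and $(v,j)$ with $i\leq j$, I would choose the two halves of the closed walk as follows. The first is a walk from $(u,i)$ to $(v,j)$ of length $\ell_1\equiv j-i \pmod 2$ with $\ell_1\geq j-i$, obtained from the first bullet of \cref{lem:exists-walk}. The second is a walk back from $(v,j)$ to $(u,i)$ of length $\ell_2=2k+3-\ell_1$, of the opposite parity, which must come from the second bullet, $\ell_2\geq 2k+2-(i+j)$. Both halves need a $\Gamma$-walk between $u$ and $v$ of the right length and parity. By the padding remark, it suffices to pick $\ell_1$ of the parity of whichever of $a,b$ has the parity $j-i$, and at least that value. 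The remaining constraint is then that $\ell_2$ is at least the other of $a,b$.

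The main obstacle is that the required lengths may not fit inside the budget $2k+3$. For example, with $i=j=1$ we need $\ell_2\geq 2k$, which forces $\ell_1\leq 3$, whereas $u$ and $v$ may lie far apart on $W$. To handle this, I would first try routing one half through $r$, since $(u,1)\,r\,(v,1)$ is a walk of length $2$ for any $u,v$. Passing through low levels via $r$ and through the top copy of $\Gamma$ otherwise gives enough freedom to realize the odd length in a few extra steps. Concretely, I would replace the expensive half by a segment that descends from $(v,j)$ to level $1$, passes through $r$, and climbs back to $(u,i)$, costing $(j-1)+2+(i-1)$ steps. The other half, from $(u,i)$ to $(v,j)$, then has odd length $2k+3-i-j$ and needs a $\Gamma$-walk of that length. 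That walk comes from $W$ (after padding) via the second bullet of \cref{lem:exists-walk}, whose condition $2k+3-i-j\geq 2k+2-(i+j)$ always holds. The step I expect to need the most care is finding that odd-length $\Gamma$-walk: one needs the odd one of $a,b$ to be at most $2k+3-i-j$, and otherwise one must choose between the $r$-route and the top-level route depending on $i+j$ and on $a,b$.
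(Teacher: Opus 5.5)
Your treatment of $k=0$, of pairs involving $r$, and the two constructions you describe for pairs $(u,i),(v,j)$ are all correct. The gap is that the proposal stops exactly where the proof has to happen. You never show that one of your constructions always applies, and the two you actually specify do not suffice.

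Normalize so that $a\equiv j-i \pmod 2$, hence $b\equiv 2k+3-i-j$.
\begin{itemize}
\item Your direct split then forces $\ell_1=a$ and $\ell_2=b$, so it needs $a\geq j-i$ and $b\geq 2k+2-(i+j)$.
\item Your route descending from $(v,j)$ through $r$ needs a $\Gamma$-walk of length $2k+3-i-j$. The walk $W$ supplies one only when $b\leq 2k+3-i-j$. Note that $2k+3-i-j$ is odd only when $i+j$ is even, so the relevant walk is the one of $a,b$ with the parity of $2k+3-i-j$, not ``the odd one''.
\end{itemize}
When $a<j-i$ both routes fail, since then $b=2k+3-a>2k+3-j+i>2k+3-i-j$.

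This genuinely occurs. Take $k=4$, $\Gamma=C_{11}$, $u,v$ at distance $2$ with common neighbour $w$, and the pair $(u,1),(v,5)$. Here $a=2<4=j-i$. The direct split would need an even half of length at least $4$ and an odd half of length at least $9$ (the shortest odd $u$--$v$ walk in $C_{11}$), which exceeds $11$. Your $r$-route needs a $u$--$v$ walk of length $5$ in $C_{11}$, which does not exist.

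The missing ingredient is a third route, which your phrase ``top-level route'' gestures at but never specifies. Suppose $a\leq j-i$.
\begin{itemize}
\item Pad to a $\Gamma$-walk of length exactly $j-i$.
\item Lift it by the first bullet of \cref{lem:exists-walk} to a $(u,i)$--$(v,j)$ walk of length $j-i$.
\item Close it with a walk of length $2k+3-j$ from $(v,j)$ to $r$: up to level $k+1$, one horizontal step, down to level $1$, then to $r$.
\item Finish with a walk of length $i$ from $r$ to $(u,i)$.
\end{itemize}
In the example this is $(u,1)(w,2)(v,3)(w,4)(v,5)(w,5)(v,4)(w,3)(v,2)(w,1)\,r\,(u,1)$.

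With this route the case analysis closes:
\begin{itemize}
\item if $a\leq j-i$, use it;
\item if $b\leq 2k+3-i-j$, use your $r$-route;
\item otherwise $a>j-i$ and $b>2k+3-i-j>2k+2-(i+j)$, so both halves of $W$ lift directly by the two bullets of \cref{lem:exists-walk}.
\end{itemize}
This trichotomy is precisely the paper's argument.

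You should also say how pairs $(v,i),(v,j)$ with the same $v$ are covered. Your closed walk through $r$ built on an edge $uv$ visits both $(u,\ell)$ and $(v,\ell)$ at every level $\ell$, because the horizontal step at the top flips the alternation. So it handles these pairs too. This is also why it is harmless that, for odd $j$, your upward leg from $(u,1)$ reaches $(u,j)$ rather than $(v,j)$.
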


\begin{proof}
    Since any two vertices of $\Gamma$ belong to a closed walk of length $2k+3$, $\Gamma$ must contain an edge.
    For $k = 0$, observe that $M_k(\Gamma)$ is the graph obtained from $\Gamma$ by adding a universal vertex. It is then straightforward to check that any two vertices of $M_k(\Gamma)$ belong to a closed walk of length $3$.

    Suppose now that $k \geq 1$. 
    Recall that $V(M_k(\Gamma)) = (V(\Gamma) \times [k+1]) \cup \{r\}$.
    First, we prove that for every $(v, i) \in V(\Gamma) \times [k+1]$, $(v, i)$ and $r$ belong to a closed walk of length $2k+3$ in $M_{k}(\Gamma)$.
    By assumption, $v$ belongs to a closed walk of length $2k+3$ in $\Gamma$, so $v$ has a neighbor $u$ in $\Gamma$.
    Then, $(u, k+1)(v, k)(u, k-1)\ldots r\ldots(v, k-1)(u, k)(v, k+1)(u, k+1)$ is a closed walk of length $2k+3$ in $M_k(\Gamma)$ which visits $(v, i)$ and $r$.
    Note that this also shows that there is a walk of length $i$ and a walk of length $2k+3-i$ between $(v, i)$ and $r$.
    It also proves that for every $v \in V(\Gamma)$ and $i \neq j \in [k+1]$, $(v, i)$ and $(v, j)$ belong to a closed walk of length $2k+3$.
    
    Finally, let $u \neq v \in V(\Gamma)$ and $i, j \in [k+1]$.
    By symmetry, we can assume that $i \leq j$.
    Suppose first that there is a walk of length $j-i$ between $u$ and $v$ in $\Gamma$.
    By \cref{lem:exists-walk}, there is a walk of length $j-i$ in $M_k(\Gamma)$ between $(u, i)$ and $(v, j)$.
    Furthermore, there is a walk of length $i$ between $(u, i)$ and $r$, and a walk of length $2k+3-j$ between $(v, j)$ and $r$. 
    The concatenation of these three walks is a closed walk of length $2k+3$ that visits $(u, i)$ and $(v, j)$.
    Suppose now that there is a walk of length $2k+3-i-j$ between $u$ and $v$ in $\Gamma$.
    By \cref{lem:exists-walk}, there is a walk of length $2k+3-i-j$ in $M_k(\Gamma)$ between $(u, i)$ and $(v, j)$.
    Furthermore, there is a walk of length $i$ between $(u, i)$ and $r$, and a walk of length $j$ between $(v, j)$ and $r$. 
    The concatenation of these three walks is a closed walk of length $2k+3$ that visits $(u, i)$ and $(v, j)$.
    Finally, suppose that we are not in one of those two cases.
    By assumption, there is a closed walk $W$ of length $2k+3$ in $\Gamma$ that goes through $u$ and $v$.
    Write $W = W_1W_2$ such that $W_1$ and $W_2$ are both walks with endpoints $u$ and $v$.
    By symmetry, we can assume that the length of $W_1$, call it $d$, has the same parity as $j-i$. Note that $W_2$ has length $2k+3-d$.
    Since there is no walk of length $j-i$ in $\Gamma$ between $u$ and $v$ in $\Gamma$ and $d$ and $j-i$ have the same parity, we have $d > j-i$.
    By \cref{lem:exists-walk}, there is a walk of length $d$ between $(u, i)$ and $(v, j)$ in $M_k(\Gamma)$.
    Since there is no walk of length $2k+3-i-j$ between $u$ and $v$ in $\Gamma$ and $2k+3-d$ and $2k+3-i-j$ have the same parity, we have $2k+3-d > 2k+3-i-j$.
    By \cref{lem:exists-walk}, there is a walk of length $2k+3-d$ between $(u, i)$ and $(v, j)$ in $M_k(\Gamma)$.
    The concatenation of these two walks is a closed walk of length $2k+3$ that visits $(u, i)$ and $(v, j)$.
\end{proof}

Throughout the paper, we will implicitly use the following fact.

\begin{observation}\label{obs:odd-walk-cycle}
    Let $G$ be a graph that contains a closed odd walk of length at most $\ell$.
    Then $G$ contains an odd cycle of length at most $\ell$.
\end{observation}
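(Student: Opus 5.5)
We will prove the contrapositive of the natural reduction. Take a closed odd walk $W = w_0 w_1 \ldots w_m = w_0$ of odd length $m \le \ell$, and argue by strong induction on $m$ that $G$ contains an odd cycle of length at most $m$. Note first that $m \geq 3$, since a closed walk of length $1$ would be a loop and $G$ is simple.

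If the vertices $w_0, \ldots, w_{m-1}$ are pairwise distinct, then $W$ is itself a cycle of odd length $m \le \ell$ and we are done. Otherwise, there are indices $0 \le a < b \le m-1$ with $w_a = w_b$. We split $W$ at this repeated vertex into two closed walks:
\[
W_1 = w_a w_{a+1} \ldots w_b \quad\text{and}\quad W_2 = w_b w_{b+1} \ldots w_m w_1 \ldots w_a .
\]
Their lengths are $b-a$ and $m-(b-a)$. Both lengths are positive: $b > a$ gives the first, and $b-a \le m-1$ gives the second. Since the two lengths sum to the odd number $m$, exactly one of them is odd. That odd one is a closed odd walk of length strictly less than $m$, so the induction hypothesis gives an odd cycle of length less than $m \le \ell$.

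The only point requiring care is making sure both pieces are genuinely shorter than $m$, i.e.\ that neither has length $0$. This is guaranteed by choosing distinct indices $a<b$ among $0,\ldots,m-1$, rather than allowing the trivial repetition $w_0 = w_m$. Beyond that, the argument is routine.
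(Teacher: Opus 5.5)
Your argument is correct. The paper states this observation without proof and uses it implicitly; your strong induction, which splits the walk at a repeated vertex and keeps the odd piece, is the standard argument it relies on.

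Two cosmetic points. The opening phrase about proving a ``contrapositive'' does not match what you actually do, which is a direct strong induction. Also, when $a=0$ the walk $W_2$ should simply read $w_b w_{b+1}\ldots w_m$.
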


As a corollary of this observation and \cref{lem:exists-closed-walk-Myc}, we immediately obtain the following.

\begin{lemma}\label{lem:Myc-2-vtces-odd-cycle}
    Let $k \geq 0$ and $t \geq 3$.
    Then any two vertices of $\Gamma_k^t$ belong to a cycle of length $2k+3$.
\end{lemma}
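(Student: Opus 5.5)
The plan is to show, by induction on $t\geq 3$, that any two vertices of $\Gamma_k^t$ lie on a common \emph{closed walk} of length $2k+3$. Then I will upgrade this walk to a cycle using the odd girth from \cref{lem:og-Myc}.

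\textbf{Base case $t=3$.} Here $\Gamma_k^3=M_k(K_2)$. Write $V(K_2)=\{a,b\}$. Unwinding the definition of the $k$-fold Mycielskian:
\begin{itemize}
    \item the root $r$ is adjacent to $(a,1)$ and $(b,1)$;
    \item for $i\in[k]$, the vertex $(a,i)$ is adjacent to $(b,i+1)$, and $(b,i)$ is adjacent to $(a,i+1)$;
    \item the only edge inside level $k+1$ is $(a,k+1)(b,k+1)$.
\end{itemize}
So $\Gamma_k^3$ is exactly the cycle
\[
r,\,(a,1),\,(b,2),\,(a,3),\dots,\ \text{level } k+1,\ \dots,(a,2),\,(b,1),\,r
\]
on all $2k+3$ vertices. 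Hence any two of its vertices lie on a closed walk (indeed a cycle) of length $2k+3$.

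\textbf{Inductive step.} Suppose any two vertices of $\Gamma_k^t$ lie on a closed walk of length $2k+3$. Since $\Gamma_k^{t+1}=M_k(\Gamma_k^t)$, \cref{lem:exists-closed-walk-Myc} gives the same property for $\Gamma_k^{t+1}$. Note that the induction cannot start at $t=2$: $K_2$ is bipartite, so it has no odd closed walk at all. This is why the base case is $t=3$.

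\textbf{From closed walks to cycles.} Fix two vertices $u,v$ of $\Gamma_k^t$, and let $W$ be a closed walk of length $2k+3$ through both. I claim $W$ repeats no vertex. Suppose some vertex occurs twice in $W$ (other than as the common start and end point). Splitting $W$ at the two occurrences gives two closed walks of positive lengths summing to $2k+3$. Since the total is odd, one of them is odd, of length at most $2k+1$. By \cref{obs:odd-walk-cycle}, $\Gamma_k^t$ then contains an odd cycle of length at most $2k+1$. This contradicts \cref{lem:og-Myc}, which says $\Gamma_k^t$ has odd girth $2k+3$. Therefore $W$ is a cycle of length $2k+3$ containing $u$ and $v$. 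If $u=v$, the same argument applies with $v$ replaced by any other vertex.

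\textbf{Main obstacle.} The only delicate point is the base case: one has to check that $M_k(K_2)$ really is the $(2k+3)$-cycle, so that the hypothesis of \cref{lem:exists-closed-walk-Myc} holds at $t=3$. The rest is a direct application of earlier results.
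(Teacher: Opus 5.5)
Your proposal is correct and follows the paper's proof. Like the paper, you induct on $t$ from the base case $\Gamma_k^3=M_k(K_2)=C_{2k+3}$, use \cref{lem:exists-closed-walk-Myc} for the inductive step, and upgrade the resulting closed walk of length $2k+3$ to a cycle via the odd girth from \cref{lem:og-Myc}. Your splitting argument at a repeated vertex just spells out the step the paper states in one line ("this closed walk must be a cycle").
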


\begin{proof}
    We prove it by induction on $t$, with $k \geq 0$ fixed.
    For $t = 3$, observe that $\Gamma_k^t = M_k(K_2) = C_{2k+3}$ so the property holds.
    Let $t \geq 3$ such that the property holds.
    Since $\Gamma_k^{t+1} = M_k(\Gamma_k^t)$, \cref{lem:exists-closed-walk-Myc} implies that any two vertices of $\Gamma_k^{t+1}$ belong to a closed walk of length $2k+3$.
    Since $\Gamma_k^{t+1}$ has odd girth at least $2k+3$ by \cref{lem:og-Myc}, this closed walk must be a cycle, which proves the desired result.
\end{proof}

Finally, the following lemma places restrictions on homomorphic images of generalized Mycielskians: every homomorphic image of $\Gamma_k^t$ with high odd girth must actually contain $\Gamma_k^t$ as an induced subgraph.

\begin{lemma}\label{lem:Myc-core}
    Let $k \geq 0$ and $t \geq 3$ and let $J$ be a graph of odd girth at least $2k+3$ such that there exists a homomorphism from $\Gamma_k^t$ to $J$.
    Then $J$ contains $\Gamma_k^t$ as an induced subgraph.
\end{lemma}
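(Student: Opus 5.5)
Let $\varphi:\Gamma_k^t\to J$ be a homomorphism. The plan is to show that $\varphi$ is injective and that its image induces exactly $\Gamma_k^t$, so that $\varphi(V(\Gamma_k^t))$ is the required induced copy.

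\emph{Injectivity.} Let $u\neq v$ be vertices of $\Gamma_k^t$. By \cref{lem:Myc-2-vtces-odd-cycle}, they lie on a common cycle $C$ of length $2k+3$. Write $C$ as the concatenation of two $u$--$v$ paths $P_1$ and $P_2$ of lengths $a$ and $2k+3-a$, where $1\le a\le 2k+2$. Exactly one of these lengths is odd; call that path $P$ and its length $b$, so $b\le 2k+2$, hence $b\le 2k+1$. Suppose $\varphi(u)=\varphi(v)$. Then the image $\varphi(P)$ is a closed walk in $J$ of odd length $b\le 2k+1$. By \cref{obs:odd-walk-cycle}, $J$ then contains an odd cycle of length at most $2k+1$, contradicting $og(J)\ge 2k+3$. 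So $\varphi$ is injective.

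\emph{Inducedness.} Suppose, for contradiction, that $\varphi(u)\varphi(v)\in E(J)$ while $uv\notin E(\Gamma_k^t)$. Take the same cycle $C$ through $u$ and $v$, now with paths $P_1,P_2$ of lengths $a$ and $2k+3-a$, and let $P$ be the path of even length $b$. Since $uv$ is not an edge, $b\ge 2$, so the other path has length at least $2$; that path is odd, so $b\le 2k+3-3=2k$. The image $\varphi(P)$ together with the edge $\varphi(u)\varphi(v)$ forms a closed walk of odd length $b+1\le 2k+1$ in $J$. This again contradicts the odd girth of $J$ via \cref{obs:odd-walk-cycle}. Hence $\varphi$ maps non-edges to non-edges, and since it is a homomorphism it also maps edges to edges. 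Therefore $J[\varphi(V(\Gamma_k^t))]\cong\Gamma_k^t$.

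There is no real obstacle here. The only point needing care is the parity bookkeeping on the two arcs of the $(2k+3)$-cycle, in particular checking that the even arc has length at most $2k$ when $u$ and $v$ are non-adjacent. The substantive input is \cref{lem:Myc-2-vtces-odd-cycle}, which is already established.
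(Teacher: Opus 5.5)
Your proof is correct and follows the paper's argument. Injectivity comes from the odd arc, of length at most $2k+1$, of the $(2k+3)$-cycle through any two vertices given by \cref{lem:Myc-2-vtces-odd-cycle}, and inducedness is the edge-maximality of $\Gamma_k^t$ among graphs of odd girth at least $2k+3$. Your explicit parity check, that the even arc between non-adjacent vertices has length at most $2k$, spells out the step the paper asserts in one line; note that the odd arc has length at least $3$ because $uv\notin E(\Gamma_k^t)$, not because $b\ge 2$ as your wording suggests.
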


\begin{proof}
    Let $\phi : V(\Gamma_k^t) \to V(J)$ be a homomorphism.
    First, we show that $\phi$ is injective.
    By contradiction, suppose not: there exist $x \neq y \in V(\Gamma_k^t)$ such that $\phi(x) = \phi(y)$.
    By \cref{lem:Myc-2-vtces-odd-cycle}, there exists a path $P$ between $x$ and $y$ in $\Gamma_k^t$ whose length is odd and smaller than $2k+3$.
    Since $\phi$ is a homomorphism, the image of $P$ under $\phi$ is a walk of the same length between $\phi(x)$ and $\phi(y)$.
    Thus, there is a closed walk in $J$ whose length is odd and smaller than $2k+3$.
    This contradicts the fact that $J$ has odd girth at least $2k+3$.
    Therefore, $J$ contains $\Gamma_k^t$ as a subgraph.
    By \cref{lem:Myc-2-vtces-odd-cycle}, any two vertices of $\Gamma_k^t$ belong to a cycle of length $2k+3$, so $\Gamma_k^t$ is edge-maximal with the property of having odd girth at least $2k+3$.
    Since $J$ has odd girth at least $2k+3$, this implies that the copy of $\Gamma_k^t$ in $J$ is induced, as desired.
\end{proof}

\subsection{High-dimensional spheres and Borsuk graphs}

We denote by $\mathbb{S}^d$ the unit sphere of $\mathbb R^{d+1}$. Given $x\in \mathbb{S}^d$, its \emph{antipodal} point is $-x$. 
We consider the distance $\dist : \mathbb{S}^d \times \mathbb{S}^d \to \mathbb{R}^+$ such that for all $u, v \in \mathbb{S}^d$, $\dist(u, v) \in [0, \pi]$ is the angular distance between $u$ and $v$. For instance, antipodal points are at distance $\pi$ while orthogonal points are at distance $\pi/2$. 

We recall the construction of the (infinite) \emph{$d$-dimensional $\theta$-Borsuk graph} $Bor(d,\theta)$ whose vertex set consists of the unit sphere $\mathbb{S}^d$ and where $u, v \in \mathbb{S}^d$ are adjacent if $\dist(u, v) \geq \pi-\theta$. 
It is well-known (cf.~\cite{EH67}) that the odd girth of $Bor(d,\theta)$ is arbitrarily large when $\theta>0$ is sufficiently small and that the fractional chromatic number\footnote{The fractional chromatic number $\chi_f(G)$ of a graph $G$ is the infimum of $\sum_{I\in\mathcal I(G)} w_I$ over all choices of nonnegative weights $(w_I)_{I\in\mathcal I(G)}$ on the independent sets of $G$ such that, for every vertex $v\in V(G)$, we have $\sum_{I\ni v} w_I\geq 1$.
}
of $Bor(d,\theta)$ tends to $2$ as $\theta$ approaches $0$. 

Arguments concerning the complexity of the $d$-dimensional $\theta$-Borsuk graph typically rely on the famous \emph{Borsuk--Ulam theorem} \cite{B33}, which can be equivalently cast as the fact that $Bor(d, \theta)$ has chromatic number at least $d+2$. 
See \cite{M03} for an overview of the applications of the Borsuk-Ulam theorem in combinatorics.

\subsection{A general construction}
The following proposition is a generalization of the construction sketched in the introduction. Rather than simply placing a tripartite Borsuk graph on the parts $X_1,X_2,X_3$ as we did there, we instead use a given graph $H$ as a ``template'' for how to place Borsuk graphs; in the example in the introduction, this template graph $H$ would be a triangle.

\begin{proposition}\label{prop:construction}
    For every integer $N \geq 1$, for every $\varepsilon > 0$, and for every graph $H$ which contains an odd cycle, there exists a graph $H'$ with the following properties: \begin{itemize}
        \item $og(H') \geq og(H) + 4$,
        \item $\delta(H') \geq \left(\frac{1}{3|V(H)|} - \varepsilon\right)|V(H')|$,
        \item If $H'$ has a homomorphism to a graph $J$ on at most $N$ vertices then $H$ also has a homomorphism to $J$.
    \end{itemize}
\end{proposition}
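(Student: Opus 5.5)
The plan is to glue together $|V(H)|$ copies of the gadget $G_0$ from the introduction, using $H$ as the template for where to place Borsuk edges. Fix $N,\varepsilon,H$, and set $g=og(H)$. Choose parameters $0<\theta\ll\eta\ll 1$ with $\eta>(g+3)\theta$, and choose $d$ so large that $\chi(Bor(d,\theta))>N^{|V(H)|}$. Let $P\subseteq\mathbb S^d$ be a finite, sufficiently fine net such that the induced Borsuk graph on $P$ still has chromatic number $>N^{|V(H)|}$; this is the standard finite approximation. For each $h\in V(H)$ we take three parts:
\begin{itemize}
\item $X_h$, a copy of $P$ whose points we write $x_h(p)$; it is an independent set;
\item $Y_h$, a large, nearly equidistributed multiset of points of $\mathbb S^d$; it is an independent set;
\item $Z_h$, an independent set.
\end{itemize}
The edges are as follows. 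We make $Y_h$ complete to $Z_h$. We join $x\in X_h$ to $y\in Y_h$ iff $\dist(x,y)\le \pi/2-\eta$. For every edge $hh'\in E(H)$ we join $x_h(p)$ to $x_{h'}(q)$ iff $\dist(p,q)\ge\pi-\theta$. Finally we blow up so that, with $n=|V(H')|$, each $|Z_h|\approx n/(3|V(H)|)$ and each $|Y_h|\approx 2n/(3|V(H)|)$, while the $X$-parts have negligible size.

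\emph{Degree.} A vertex of $Y_h$ sees all of $Z_h$, and a vertex of $Z_h$ sees all of $Y_h$. A vertex of $X_h$ sees a cap of radius $\pi/2-\eta$, which has measure close to $1/2$; by equidistribution of $Y_h$ it therefore sees about $\frac12|Y_h|\approx n/(3|V(H)|)$ vertices. Taking $\eta$ small and the net and blow-up fine gives the second bullet.

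\emph{Homomorphisms.} Let $\varphi:H'\to J$ with $|V(J)|\le N$. Colour each $p\in P$ by the vector $c(p)=(\varphi(x_h(p)))_{h\in V(H)}$, which uses at most $N^{|V(H)|}$ colours. By the choice of $d$ (Borsuk--Ulam), there are $p,q\in P$ with $\dist(p,q)\ge\pi-\theta$ and $c(p)=c(q)$. For every edge $hh'\in E(H)$, the vertices $x_h(p)$ and $x_{h'}(q)$ are adjacent, so
\[
\varphi(x_h(p))\sim\varphi(x_{h'}(q))=\varphi(x_{h'}(p)).
\]
Hence $h\mapsto\varphi(x_h(p))$ is a homomorphism $H\to J$. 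Note that this step does not use the $Y$ and $Z$ parts at all.

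\emph{Odd girth.} This is the main obstacle. Let $C$ be an odd cycle of length less than $g+4$.
\begin{itemize}
\item Since $Y_h\cup Z_h$ is bipartite and only touches $X_h$, $C$ must meet some $X$-part.
\item Decompose $C$ into edges between $X$-parts, together with maximal excursions that leave some $X_h$ into $Y_h\cup Z_h$ and return to $X_h$. Each excursion has even length: it is either $x\,y\,x'$ of length $2$, or it passes through $Z_h$ and has length at least $4$.
\item Hence the number $m$ of $X$–$X$ edges is odd. These edges project to a closed walk of length $m$ in $H$, so $m\ge g$.
\item If $C$ has no excursion, $C$ is an odd cycle in a cover of a Borsuk graph. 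Since each step moves to a nearly antipodal point, the endpoint after an odd number $m$ of steps is within $m\theta$ of the antipode of the starting point. A closed odd walk therefore forces $m\theta\ge\pi$, so $C$ is long.
\item If the excursions have total length at least $4$, then $|C|\ge g+4$.
\item The remaining case is exactly one excursion $x\,y\,x'$ and an $X$-path from $x'$ to $x$ of odd length $m\le g+1$. The Borsuk steps force $\dist(x,x')\ge\pi-m\theta>\pi-2\eta$. But $x,x'$ both lie in the cap of radius $\pi/2-\eta$ around $y$, whose diameter is $\pi-2\eta$, a contradiction. This is exactly where $\eta>(g+3)\theta$ is used.
\end{itemize}
The main care will go into making this excursion and parity bookkeeping rigorous, and into checking that the discretization of the net and of $Y_h$ preserves both the cap measure and the Borsuk--Ulam chromatic bound.
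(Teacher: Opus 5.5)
Your proposal is correct, and its skeleton matches the paper's. Both use $|V(H)|$ copies of an $X\cup Y\cup Z$ gadget with $Y$ complete to $Z$, and join the $X$-parts as in the tensor product of a Borsuk graph with $H$ (your $\theta$ plays the role of the paper's $\iota$). Both use the colour-vector pigeonhole $c(p)=(\varphi(x_h(p)))_h$ for the third bullet. Both use a parity count showing that the only dangerous short odd cycle is one detour $x\,y\,x'$ through $Y_h$ plus $og(H)$ Borsuk steps.

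The paper packages that parity count as the homomorphism $H'\to H^+$ applied to a closed walk of length exactly $og(H)+2$. It obtains the finite point set from de Bruijn--Erd\H{o}s rather than a net. Your net also works: with mesh $\rho<\theta/2$, colouring each point of $\mathbb S^d$ like its nearest net point properly colours $Bor(d,\theta-2\rho)$.

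The genuinely different ingredient is the $X$--$Y$ graph. You use the explicit cap graph from the introduction. The paper instead lets $Y$ be the independent sets of $Bor(d,\theta)[X]$, weighted by a rational optimal fractional colouring, and cites only $\chi_f(Bor(d,\theta))\to 2$.
\begin{itemize}
\item The paper's choice makes ``two $X_h$-vertices with a common $Y_h$-neighbour are non-adjacent in $Bor(d,\theta)$'' automatic, and makes the degree count exact and finite.
\item Your version is more concrete; your caps are in effect the fractional colouring behind $\chi_f\to 2$. But it needs the cap-diameter bound, a cap-measure estimate, and equidistribution of $Y_h$ with respect to the finitely many caps centred at points of $P$.
\end{itemize}

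One point you must make explicit, because as written the order of choices is backwards. By concentration of measure, a cap of radius $\pi/2-\eta$ in $\mathbb S^d$ has measure close to $1/2$ only when $\eta\ll 1/\sqrt d$. Once $d\eta^2$ is large its measure is exponentially small, and the degree bound for $X$-vertices would fail. So choosing $\theta$ and $\eta$ first and then $d$ ``so large'' does not work.

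Since $\chi(Bor(d,\theta))\ge d+2$ for every $\theta>0$, the fix is straightforward: fix $d$ with $d+2>N^{|V(H)|}$ first; then choose $\eta$ depending on $d$ and $\varepsilon$; then $\theta<\eta/(g+3)$; then the net and the blow-up. The paper likewise fixes $d$ before choosing $\theta$ with $\chi_f(Bor(d,\theta))\le 2+\eta$, and only then chooses $\iota<\theta/(og(H)+2)$.
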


\begin{proof}
    Let $d \in \mathbb{N}$ be large enough so that for every $\theta > 0$, the Borsuk graph $Bor(d, \theta)$ has chromatic number greater than $N^{|V(H)|}$.
    Define $\zeta = \varepsilon|V(H)|$ and let $\eta>0$ be chosen small enough so that $\frac{1}{2+\eta}\cdot \frac{1}{3/2 + \eta} \geq \frac{1}{3} - \zeta$.
    Let $\theta \in (0, \pi)$ be small enough so that $\chi_f(Bor(d, \theta)) \leq 2 + \eta$ and let $\iota < \frac{\theta}{og(H)+2}$.
    By the \emph{de Bruijn--Erd\H{o}s theorem}, there exists a finite set $X \subseteq \mathbb{S}^d$ such that $\chi(Bor(d, \iota)[X]) = \chi(Bor(d, \iota)) > N^{|V(H)|}$.
    Let $F \coloneqq Bor(d, \theta)[X]$, and observe that $\chi_f(F) \leq \chi_f(Bor(d, \theta)) \leq 2 + \eta$.

    Let $\mathcal{I}$ be the set of all independent sets of $F$.
    Since $\chi_f(F) \leq 2 + \eta$, there exists a weight function $w : \mathcal{I} \to \mathbb{R}^+$ such that $\sum_{v \in I \in \mathcal{I}}w(I) \geq 1$ for every $v \in X$ and $w(\mathcal{I}) \coloneqq \sum_{I \in \mathcal{I}}w(I) \leq 2+\eta$.
    Since $\chi_f(F)$ is the solution of a (finite) linear program with integral coefficients, we may assume without loss of generality that $w : \mathcal{I} \to \mathbb{Q}^+$.
    Thus, after rescaling, there exists a function $w' : \mathcal{I} \to \mathbb{N}$ such that $\sum_{v \in I \in \mathcal{I}}w'(I) \geq \frac{1}{2+\eta} \cdot w'(\mathcal{I})$ for every $v \in X$.
    By considering the (bipartite) incidence graph between the vertices $X$ and the independent sets $\mathcal{I}$, and replacing each vertex corresponding to an independent set $I \in \mathcal{I}$ by $w'(I)$ copies of itself, it follows that there exists a set $Y$ and a bipartite graph $B$ with bipartition $(X, Y)$ such that $|N_B(v)| \geq \frac{1}{2+\eta} \cdot |Y|$ for every $v \in X$.
    Note that $Y$ can be chosen to have even size and arbitrarily large, by replacing each element of $Y$ by a large (but fixed) number of copies of itself.
    In particular, we can assume that $|Y|$ is even and that $|X| \leq \eta|Y|$.
    By adding a set $Z$ of size $|Y|/2$ that is complete to $Y$, we obtain a bipartite graph $G$ with bipartition $(X \cup Z, Y)$.
    Note that $|V(G)| = |X| + |Z| + |Y| = |X| + \frac{3}{2}|Y| \leq \left(\frac{3}{2} + \eta\right)|Y|$.
    Then, every $v \in X \cup Z$ satisfies $|N_{G}(v)| \geq \frac{1}{2+\eta} \cdot |Y| \geq \frac{1}{2 + \eta} \cdot \frac{1}{3/2 + \eta}|V(G)| \geq  \left(\frac{1}{3} - \zeta\right)|V(G)|$ by definition of $\eta$.
    Also, every $v \in Y$ satisfies $|N_{G}(v)| \geq |Z| = \frac{|Y|}{2} \geq \frac{1}{2} \cdot \frac{|V(G)|}{3/2 + \eta} \geq \frac{1}{2 + \eta} \cdot \frac{1}{3/2 + \eta}|V(G)| \geq \left(\frac{1}{3} - \zeta\right)|V(G)|$ again by definition of $\eta$.
    Thus, $\delta(G) \geq \left(\frac{1}{3} - \zeta\right)|V(G)|$.

    For every $h \in V(H)$, let $G_h$ be a copy of $G$ such that all copies $G_h$ are vertex-disjoint.
    For every $h \in V(H)$, denote the vertex set of $G_h$ by $X_h \cup Y_h \cup Z_h$, and for every $x \in X$ let $x_h$ denote the copy of $x$ in $X_h$.
    Then, let $H'$ be the graph obtained from the disjoint union of all $G_h$ by adding edges between the sets $X_h$ as in the tensor (or categorical) product $Bor(d, \iota) \times H$.
    Formally, connect $x_h$ and $x'_{h'}$ for $x, x' \in X$ and $h, h' \in V(H)$ if and only if $xx' \in E(Bor(d, \iota))$ (i.e. if $\dist(x, x') \geq \pi - \iota$) and $hh' \in E(H)$.

    We now show that $H'$ satisfies the desired properties.
    By contradiction, suppose that $og(H') < og(H) + 4$, so $og(H') \leq og(H) + 2$.
    Since we can always extend the length of a closed walk by any even number by moving forth and back along some edge, it follows that there exists a closed walk $W'$ of length \emph{exactly} $\ell \coloneqq og(H) + 2$ in $H'$.
    Let $H^+$ be the graph obtained from $H$ by adding for every $h \in V(H)$ a disjoint edge $e_h \coloneqq a_hb_h$ which we connect to $h$ via another edge $ha_h$. 
    By construction, the map $\phi : V(H') \to V(H^+)$ that maps each set $X_h$ to $h$, each set $Y_h$ to $a_h$ and each set $Z_h$ to $b_h$, is a homomorphism from $H'$ to $H^+$.
    Thus, $\phi$ maps $W'$ to a closed walk $W^+$ of length $\ell$ in $H^+$.
    Suppose first that $W^+$ only visits vertices of $H$ in $H^+$.
    Then, we can write $W' = x^0_{h_0}x^1_{h_1}\ldots x^{\ell-1}_{h_{\ell-1}}x^0_{h_0}$ where $h_i \in V(H)$ and $x_i \in X$ for every $i \in \{0, \ldots, \ell\}$.
    A simple proof by induction shows that $\dist(x^0, x^j) \leq j\iota$ for every even $j \in \{0, \ldots, \ell-1\}$.
    Since $\ell-1$ is even, we have $\dist(x^0, x^{\ell-1}) \leq (\ell-1)\iota$.
    However, since $x^{\ell-1}_{h_{\ell-1}}x^0_{h_0}$ is an edge of $H'$, we have $\dist(x^{0}, x^{\ell-1}) \geq \pi-\iota$, so $\pi-\iota \leq (\ell-1)\iota$ so $\iota \geq \frac{\pi}{\ell} = \frac{\pi}{og(H)+2}$, a contradiction since $\iota < \frac{\theta}{og(H)+2} < \frac{\pi}{og(H)+2}$ by definition.
    Thus, $W^+$ does not only visit vertices from $V(H)$ in $H^+$.
    Since $W^+$ has length $og(H)+2$ and every odd cycle of $H^+$ lives in $V(H)$, so has length at least $og(H)$, it follows that there exists $h \in V(H)$ so that $W^+$ visits $h$, then $a_h$, then $h$ again, and then forms a cycle of length $og(H)$ that only visits vertices of $H$ in $H^+$.
    Let $x^0_h$ and $x^2_h$ be the two vertices visited by $W'$ that belong to $X_h$.
    Then, $x^0_h$ and $x^2_h$ have a common neighbor in $Y_h$, and so there is an independent set in $F = Bor(d, \theta)[X]$ containing both $x^0$ and $x^2$. Hence, $x^0$ and $x^2$ are not adjacent in $F = Bor(d, \theta)[X]$, so $\dist(x^0, x^2) \leq \pi - \theta$.
    However, as before, a simple proof by induction shows that $\dist(x^0, x^2) \geq \pi - og(H)\iota$.
    This implies $\pi-og(H)\iota \leq \pi - \theta$, so $\iota \geq \frac{\theta}{og(H)}$, again a contradiction.
    Thus, $og(H') \geq og(H) + 4$.

    By definition, we have $|V(H')| = |V(H)| \cdot |V(G)|$.
    Let $v \in V(H')$ and let $h \in V(H)$ such that $v \in V(G_h)$.
    Then, \[d_{H'}(v) \geq d_{G_h}(v) \geq \left(\frac{1}{3} - \zeta\right)|V(G)| = \left(\frac{1}{3|V(H)|} - \frac{\zeta}{|V(H)|}\right)|V(H')| = \left(\frac{1}{3|V(H)|} - \varepsilon\right)|V(H')|.\]

    Let $J$ be a graph on at most $N$ vertices such that there exists a homomorphism $\phi : V(H') \to V(J)$ from $H'$ to $J$.
    Without loss of generality, let us assume that $V(J) = [N]$.
    Write $V(H) = \{h_1, \ldots, h_{|V(H)|}\}$.
    For each $x \in X$, let $c(x) \coloneqq (\phi(x_{h_1}), \ldots, \phi(x_{h_{|V(H)|}})) \in [N]^{|V(H)|}$.
    Since we have $\chi(Bor(d, \iota)[X]) > N^{|V(H)|}$, it follows that $c$ is not a proper coloring of $Bor(d, \iota)[X]$, so there exist $x, y \in X$ such that $xy \in E(Bor(d, \iota)[X])$ and $c(x) = c(y)$.
    We show that the map $\psi : V(H) \to [N], h \mapsto \phi(x_h)$ is a homomorphism from $H$ to $J$.
    Let $h, h' \in V(H)$ such that $hh' \in E(H)$.
    Then, $x_hy_{h'} \in E(H')$ so $\phi(x_h)\phi(y_{h'})$ is an edge of $J$.
    By definition, we have $\psi(h) = \phi(x_h)$ and $\psi(h') = \phi(x_{h'}) = \phi(y_{h'})$ since $c(x) = c(y)$.
    Thus, $\psi(h)\psi(h') \in E(J)$, so $\psi$ is indeed a homomorphism.
\end{proof}

\subsection{Proof of the lower bounds}

We prove the following result, which, together with \cref{lem:Myc-universal}, implies the lower bounds for \cref{thm:threshold-Kt,thm:threshold-odd-girth}.

\begin{proposition}\label{proof:lowerbound}
    For every $k \geq 2$, for every $t \geq 3$, we have \[\delta_{\hom}(\{C_3, C_5, \ldots, C_{2k+1}\}, \{C_3, \ldots, C_{2k-3}, \Gamma_{k-2}^t\}) \geq \frac{1}{3|V(\Gamma_{k-2}^t)|}.\]
\end{proposition}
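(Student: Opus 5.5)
Apply \cref{prop:construction} with $H \coloneqq \Gamma_{k-2}^t$. Write $\Gamma = \Gamma_{k-2}^t$. By \cref{lem:og-Myc} (with $k-2$ in place of $k$), $\Gamma$ has odd girth exactly $2k-1$, so it contains an odd cycle and the proposition applies.

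To show the threshold is at least $\frac{1}{3|V(\Gamma)|}$, we must rule out any $c<\frac{1}{3|V(\Gamma)|}$. So fix such a $c$ and suppose, for contradiction, that there is a finite graph $J$ which is $\{C_3,\ldots,C_{2k-3},\Gamma\}$-free and to which every $\{C_3,\ldots,C_{2k+1}\}$-free graph with minimum degree at least $cn$ maps. Set $N\coloneqq |V(J)|$ and $\varepsilon \coloneqq \frac{1}{3|V(\Gamma)|}-c>0$. \cref{prop:construction} yields a graph $H'$ with the following properties.
\begin{enumerate}
\item $og(H')\ge og(\Gamma)+4 = 2k+3$, so $H'$ contains none of $C_3,\ldots,C_{2k+1}$.
\item $\delta(H')\ge (\frac{1}{3|V(\Gamma)|}-\varepsilon)|V(H')| = c|V(H')|$.
\end{enumerate}
By the assumption on $J$, there is then a homomorphism $H'\to J$. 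The third property of \cref{prop:construction} then gives a homomorphism $\Gamma\to J$.

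Next, convert this homomorphism into a copy of $\Gamma$ inside $J$ using \cref{lem:Myc-core}, again with $k-2$ in place of $k$. That lemma requires $J$ to have odd girth at least $2(k-2)+3=2k-1$. This follows because $J$ is $\{C_3,\ldots,C_{2k-3}\}$-free, so every odd cycle in $J$ has length at least $2k-1$. \cref{lem:Myc-core} then says $J$ contains $\Gamma$ as an induced subgraph, and in particular as a subgraph. This contradicts $\Gamma$-freeness of $J$, and since $c<\frac{1}{3|V(\Gamma)|}$ was arbitrary, the stated lower bound follows.

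The argument is essentially a bookkeeping combination of earlier results. The points needing care are choosing $N$ from the hypothetical $J$ before invoking \cref{prop:construction}, and matching the index shift $k\mapsto k-2$ in \cref{lem:og-Myc} and \cref{lem:Myc-core}. If $J$ is only bounded in size by a function of $c$ rather than fixed, we take $N$ to be that bound; the argument is otherwise unchanged.
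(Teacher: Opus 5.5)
Your proof is correct and is essentially the paper's argument. Both apply \cref{prop:construction} with $H=\Gamma_{k-2}^t$ and $N$ the size of the hypothetical target $J$, note that $og(H')\ge 2k+3$, and pull back to a homomorphism $\Gamma_{k-2}^t\to J$; then \cref{lem:Myc-core} with $k-2$ in place of $k$ (valid since $J$ has odd girth at least $2k-1$) yields a copy of $\Gamma_{k-2}^t$ in $J$. The only difference is cosmetic: you quantify explicitly over $c<\frac{1}{3|V(\Gamma_{k-2}^t)|}$ by setting $\varepsilon=\frac{1}{3|V(\Gamma_{k-2}^t)|}-c$, whereas the paper states the equivalent ``for every $\varepsilon>0$ and $N\ge 1$'' form.
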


\begin{proof}
    We show that for every $\varepsilon > 0$, and for every $N \geq 1$, there exists a graph $G$ which is $\{C_3, C_5, \ldots, C_{2k+1}\}$-free, has minimum degree at least $\left(\frac{1}{3|V(\Gamma_{k-2}^t)|} - \varepsilon\right)|V(G)|$ and has no homomorphism to a $\{C_3, \ldots,\allowbreak C_{2k-3},\allowbreak \Gamma_{k-2}^t\}$-free graph on at most $N$ vertices.
    Given $\varepsilon > 0$ and $N \geq 1$, let $G$ be the graph given by \cref{prop:construction} for $H = \Gamma_{k-2}^t$.
    Then, $og(G) \geq og(\Gamma_{k-2}^t) + 4 \geq 2k+3$ by \cref{lem:og-Myc}, so $G$ is indeed $\{C_3, C_5, \ldots, C_{2k+1}\}$-free and has minimum degree at least $\left(\frac{1}{3|V(\Gamma_{k-2}^t)|} - \varepsilon\right)|V(G)|$.
    By contradiction, suppose that $G$ has a homomorphism to a $\{C_3, \ldots, C_{2k-3}, \Gamma_{k-2}^t\}$-free graph $J$ on at most $N$ vertices.
    By construction of $G$, we have that $\Gamma_{k-2}^t$ has a homomorphism to $J$. Since $og(J) \geq 2k-1$, \cref{lem:Myc-core} implies that $J$ contains $\Gamma_{k-2}^t$ as a subgraph, a contradiction.
\end{proof}

\section{Existence of homomorphisms}

We first need some preliminary results.

\begin{lemma}\label{lem:almost-dom}
    Let $\delta, \varepsilon > 0$ and let $G$ be an $n$-vertex graph with minimum degree at least $\delta n$.
    There exists a set of at most $\lceil\log(1/\varepsilon)/\delta\rceil$ vertices of $G$ which dominates all but at most $\varepsilon n$ vertices of $G$.
\end{lemma}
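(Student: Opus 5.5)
We will use the standard greedy argument, choosing the dominating vertices one at a time so that each choice shrinks the set of undominated vertices by a factor of at least $(1-\delta)$. Start with $S_0=\emptyset$ and $U_0=V(G)$, and let $U_i$ denote the set of vertices not dominated by $S_i$ (neither in $S_i$ nor adjacent to a vertex of $S_i$). Given $S_i$ with $|U_i|>\varepsilon n$, we pick the next vertex by double counting the pairs $(v,u)$ with $v\in V(G)$, $u\in U_i$ and $vu\in E(G)$. Each $u\in U_i$ has at least $\delta n$ neighbours, so the number of pairs is at least $|U_i|\delta n$. Since there are $n$ choices for $v$, some vertex $v$ is adjacent to at least $\delta|U_i|$ vertices of $U_i$. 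We set $S_{i+1}=S_i\cup\{v\}$, which gives $|U_{i+1}|\le (1-\delta)|U_i|$.

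Iterating, after $m$ steps we have $|U_m|\le (1-\delta)^m n\le e^{-\delta m}n$, using $1-\delta\le e^{-\delta}$. Taking $m=\lceil \log(1/\varepsilon)/\delta\rceil$, with $\log$ the natural logarithm, gives $e^{-\delta m}\le \varepsilon$. Hence $S_m$ has at most $m$ vertices and dominates all but at most $\varepsilon n$ vertices. If the process stops earlier because $|U_i|\le\varepsilon n$, we simply keep the smaller set.

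There is no real obstacle. The only points to check are two edge cases:
\begin{itemize}
\item If $\varepsilon\ge 1$, the empty set already works.
\item The averaging step needs $\delta n>0$, which holds because $\delta>0$ and, when $U_i\neq\emptyset$, $n\ge1$.
\end{itemize}
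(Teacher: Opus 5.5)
Your proposal is correct and is essentially the paper's own proof: the same double-counting step shows some vertex is adjacent to at least a $\delta$-fraction of the currently undominated set, and iterating $\lceil\log(1/\varepsilon)/\delta\rceil$ times gives $(1-\delta)^{m}n\le e^{-\delta m}n\le\varepsilon n$. The only differences are cosmetic: you also remove the chosen vertex itself from the undominated set, and you allow stopping early. Neither changes the argument.
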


\begin{proof}
    Since $G$ has minimum degree $\delta n$, a simple double-counting argument shows that for every set $V'$ of $k$ vertices of $G$, some vertex of $G$ has at least $\delta k$ neighbors in $V'$.
    Set $V_0 = V(G)$ and let $\ell = \lceil\log(1/\varepsilon)/\delta\rceil$.
    For every $i \in [\ell]$, let $v_i \in V(G)$ such that $v_i$ has at least $\delta |V_{i-1}|$ neighbors in $V_{i-1}$, and let $V_i = V_{i-1} \setminus N(v_i)$.
    Then, $\{v_1, \ldots, v_{\ell}\}$ dominates $V(G) \setminus V_{\ell}$ by definition.
    Furthermore, for every $i \in [\ell]$, we have $|V_i| \leq (1-\delta)|V_{i-1}|$, so $|V_{\ell}| \leq (1-\delta)^{\ell}n \leq e^{-\delta{\ell}}n \leq \varepsilon n$ by definition of $\ell$.
\end{proof}

We will use a slightly modified variant of the degree form of Szemerédi's Regularity Lemma \cite{S75}, see for instance \cite{KS96} for a closely related statement.
Recall that in a graph $G$, a pair $(U, W)$ of disjoint nonempty subsets of $V(G)$ is  \emph{$\varepsilon$-regular}, for $\varepsilon > 0$, if for every $U' \subseteq U$ and $W' \subseteq W$ such that $|U'| \geq \varepsilon|U|$ and $|W'| \geq \varepsilon|W|$, we have $|d(U, W) - d(U', W')| \leq \varepsilon$, where $d(U, W) = \frac{e(U, W)}{|U||W|}$.

\begin{lemma}\label{lem:deg-regularity}
    For every $\varepsilon > 0$ and $M_0\in \mathbb{N}$, there exists $M = M(\varepsilon,M_0)$ such that for any graph $G$, any partition $\mathcal{P}$ of $V(G)$ into at most $M_0$ pieces and any real number $\zeta \in [0, 1]$, there is a partition of $V(G)$ into sets $V_0, \ldots, V_p$ with $p\le M$ and a spanning subgraph $G'$ of $G$ with the following properties: \begin{itemize}
        \item
         $|V_0| \leq \varepsilon |V(G)|$,
        \item For every $i \geq 1$, $V_i$ is contained in a part of $\mathcal{P}$,
        \item $|V_i| = |V_j|$ for all $i, j \geq 1$,
        \item For every $v \in V(G)$, we have $d_{G'}(v) \geq d_G(v) - (\zeta + \varepsilon)|V(G)|$,
        \item $V_i$ is an independent set in $G'$ for all $i \geq 1$, 
        \item In $G'$, for all $i \neq j \in [p]$, the pair $(V_i, V_j)$ is $\varepsilon$-regular, with density either $0$ or greater than $\zeta$.
    \end{itemize}
\end{lemma}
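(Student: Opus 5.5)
The plan is to derive the statement from the standard refinement form of Szemerédi's Regularity Lemma, followed by the usual cleaning of the graph. First apply the regularity lemma, in the version that refines a given partition, with parameter $\varepsilon' \ll \varepsilon$ (say $\varepsilon' = \varepsilon/10$), minimum number of parts $m_0 \geq 1/\varepsilon'$, and the partition $\mathcal{P}$ as the initial partition. This yields an equipartition $V_0', V_1, \ldots, V_p$ with the following properties:
\begin{itemize}
\item $|V_0'| \leq \varepsilon'|V(G)|$;
\item each $V_i$ with $i \geq 1$ lies inside a part of $\mathcal{P}$;
\item all but $\varepsilon' p^2$ pairs $(V_i,V_j)$ are $\varepsilon'$-regular in $G$;
\item $p \leq M(\varepsilon', M_0)$.
\end{itemize}
Refining an initial partition with at most $M_0$ parts costs only extra leftover vertices, which go into the exceptional set. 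The bound on $p$ therefore depends only on $\varepsilon$ and $M_0$, not on $\zeta$, because $\zeta$ enters only in the cleaning step.

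Next, obtain $G'$ from $G$ by deleting the following edges:
\begin{enumerate}
\item all edges inside each $V_i$;
\item all edges incident to $V_0$;
\item all edges of irregular pairs;
\item all edges of regular pairs with density at most $\zeta$.
\end{enumerate}
After this, each $V_i$ is independent and every remaining pair is either empty or $\varepsilon'$-regular with density greater than $\zeta$. Deleting all edges of a regular pair leaves density $0$, which is trivially regular. The degree loss of a vertex $v \in V_i$ is bounded term by term:
\begin{enumerate}
\item edges inside $V_i$ cost at most $|V_i| \leq n/p \leq \varepsilon' n$;
\item edges to $V_0$ cost at most $\varepsilon' n$;
\item edges in low-density regular pairs cost at most $\zeta n$ on average.
\end{enumerate}
The only loss bounded merely on average is the one from irregular pairs.

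The main obstacle is that the statement requires the degree bound $d_{G'}(v) \geq d_G(v) - (\zeta+\varepsilon)n$ for \emph{every} vertex, whereas regularity controls only averages. There are two offending contributions: a vertex may lie in many irregular pairs, and a vertex may have atypically many neighbours inside a low-density pair. These are handled in the standard way of the degree form (as in Koml\'os--Simonovits), by moving bad vertices into $V_0$.

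For irregular pairs, I would call $V_i$ bad if it lies in more than $\sqrt{\varepsilon'}p$ irregular pairs. There are at most $2\sqrt{\varepsilon'}p$ bad clusters, and they are moved wholesale to $V_0$; every other cluster loses at most $\sqrt{\varepsilon'}n$ to irregular pairs. For low-density regular pairs, regularity implies that fewer than $\varepsilon'|V_i|$ vertices of $V_i$ have more than $(d+\varepsilon')|V_j|$ neighbours in $V_j$. However, a single vertex can be atypical for several $j$. To control this, I would discard from each $V_i$ the vertices that are atypical for more than $\sqrt{\varepsilon'}p$ indices $j$. A double-counting argument shows these are at most a $\sqrt{\varepsilon'}$ fraction of $V_i$; the remaining vertices lose at most $(\zeta+\varepsilon')n + \sqrt{\varepsilon'}n$ to low-density pairs.

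To restore equal cluster sizes, the same number of vertices is removed from every cluster (padding arbitrarily), and all removed vertices go to $V_0$. Regularity survives with parameter $2\varepsilon'$ because only a small fraction of each cluster is removed. The exceptional set then has size at most $O(\sqrt{\varepsilon'})n$.

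It remains to check vertices placed in $V_0$, which must also satisfy the degree condition even though all their edges were deleted. This is the real crux, and I would resolve it by not deleting edges at $V_0$. The lemma only requires the conditions on $G'$ for pairs $i,j \geq 1$ and imposes nothing on edges touching $V_0$. So $G'$ keeps all edges incident to $V_0$, and vertices in $V_0$ lose nothing. Each vertex outside $V_0$ then loses at most $(\zeta + O(\sqrt{\varepsilon'}))n$ in total. Choosing $\varepsilon'$ with $C\sqrt{\varepsilon'} \leq \varepsilon$ for the resulting absolute constant $C$ finishes the proof.
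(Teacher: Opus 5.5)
The paper gives no proof of this lemma; it only points to the Koml\'os--Simonovits degree form. Your route is exactly that standard argument: apply the refining regularity lemma, discard clusters lying in many irregular pairs, move vertices that are atypical for many pairs into $V_0$, trim to equal sizes, and keep all edges at $V_0$. However, one step fails as written. You decide which regular pairs are ``low density'' using the densities $d(V_i,V_j)$ of the \emph{untrimmed} clusters, and only afterwards remove up to a $\sqrt{\varepsilon'}$-fraction of each cluster (the padding vertices being arbitrary). Regularity only guarantees that the trimmed density $d(V_i^*,V_j^*)$ is within $\varepsilon'$ of $d(V_i,V_j)$. So a retained pair with $d(V_i,V_j)\in(\zeta,\zeta+\varepsilon']$ can end up with density in $(0,\zeta]$, violating the last bullet: density either $0$ or greater than $\zeta$. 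You re-check regularity after trimming, but not this density dichotomy. This matters for the paper, which applies the lemma with $\zeta=2\eta$ and regularity parameter $\eta$ so that \cref{lem:regular-few-sparse} applies to every nonempty pair.

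The fix is routine. Declare a regular pair low-density if $d(V_i,V_j)\le\zeta+\varepsilon'$, define atypical vertices with respect to these pairs, and delete all of them. A typical vertex then loses at most $(\zeta+2\varepsilon')|V_j|$ per deleted pair, an extra $\varepsilon' n$ overall. Every retained pair has trimmed density greater than $(\zeta+\varepsilon')-\varepsilon'=\zeta$. Equivalently, you could define atypicality for all regular pairs and do the density-based deletion after trimming. Two smaller points. First, your initial choice $\varepsilon'=\varepsilon/10$ is incompatible with the $O(\sqrt{\varepsilon'})$ losses, so take $\varepsilon'$ of order $\varepsilon^2$ from the start, as your final sentence implicitly does. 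Second, graphs with fewer vertices than the regularity lemma requires need separate (trivial) handling, e.g.\ by singleton clusters.
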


We will also use the following basic fact on regular pairs.

\begin{lemma}\label{lem:regular-few-sparse}
    Let $G$ be a graph, let $\varepsilon > 0$, let $U, W$ be nonempty disjoint subsets of $V(G)$ such that the pair $(U, W)$ is $\varepsilon$-regular with density greater than $2\varepsilon$.
    Then, there are less than $\varepsilon|U|$ vertices $u \in U$ such that $|N_G(u) \cap W| < \varepsilon|W|$.
\end{lemma}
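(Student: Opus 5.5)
We argue by contradiction, applying the definition of $\varepsilon$-regularity once to a suitably chosen bad set. Let $U' \subseteq U$ be the set of vertices $u \in U$ with $|N_G(u) \cap W| < \varepsilon|W|$, and suppose for contradiction that $|U'| \geq \varepsilon|U|$. We take $W' = W$, which trivially satisfies $|W'| \geq \varepsilon|W|$ because $\varepsilon$-regularity is only meaningful for $\varepsilon \le 1$. (If $\varepsilon > 1$, then no vertex can have fewer than $\varepsilon|W|$ neighbours fail to be bounded, and the density hypothesis $d(U,W) > 2\varepsilon > 1$ is impossible, so that case is vacuous.)

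Since $U'$ and $W'$ satisfy the size conditions, regularity gives $|d(U',W) - d(U,W)| \le \varepsilon$, and hence $d(U',W) \ge d(U,W) - \varepsilon > 2\varepsilon - \varepsilon = \varepsilon$. On the other hand, every vertex of $U'$ has fewer than $\varepsilon|W|$ neighbours in $W$. Summing over $U'$ gives $e(U',W) < \varepsilon|U'||W|$, that is, $d(U',W) < \varepsilon$. This contradicts the previous bound, so $|U'| < \varepsilon|U|$, which is exactly the claim.

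The only point needing care is that the definition requires $|U'| \ge \varepsilon|U|$. This is precisely our contradiction hypothesis, so the definition applies directly and no other step presents an obstacle. The strict inequalities must also be tracked correctly. The density hypothesis is strict, $d(U,W) > 2\varepsilon$, which gives $d(U',W) > \varepsilon$. The degree bound is also strict, which gives $d(U',W) < \varepsilon$. Together these yield a genuine contradiction.
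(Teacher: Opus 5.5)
Your proof is correct and is the paper's argument: apply $\varepsilon$-regularity to the set $U'$ of low-degree vertices against all of $W$ to get $d(U',W) > \varepsilon$, which contradicts every vertex of $U'$ having fewer than $\varepsilon|W|$ neighbours in $W$. Your parenthetical about $\varepsilon > 1$ is garbled, but you do not need it: $d(U,W) \le 1$ together with $d(U,W) > 2\varepsilon$ already forces $\varepsilon < 1/2$, so $W' = W$ is admissible.
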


\begin{proof}
    Let $U' = \{u \in U : |N_G(u) \cap W| < \varepsilon|W|\}$.
    If $|U'| \geq \varepsilon|U|$, since the pair $(U, W)$ is regular, we would have $d(U', W) \geq \varepsilon$, which would imply that some vertex $u \in U'$ has at least $\varepsilon|W|$ neighbors in $W$, a contradiction.
\end{proof}

The following proposition yields the upper bounds of \cref{thm:threshold-Kt,thm:threshold-odd-girth}.

\begin{proposition}\label{prop:proof-upper-bound}
    For every $k \geq 2$, for every $t \geq 3$, we have \[\delta_{\hom}(\{C_3, C_5, \ldots, C_{2k+1}\}, \{C_3, \ldots, C_{2k-3}, \Gamma_{k-2}^t\}) \leq \frac{1}{3|V(\Gamma_{k-2}^t)|}.\]
\end{proposition}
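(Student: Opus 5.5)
The plan is to compress $G$ via the degree form of regularity, map each vertex to a bounded ``neighbourhood type'', and then rule out both short odd cycles and copies of $\Gamma$ in the image by a counting argument. Write $\Gamma=\Gamma_{k-2}^t$ and $m=|V(\Gamma)|$, and fix $c=\frac{1}{3m}+\gamma$ with $\gamma>0$. Let $G$ be an $n$-vertex graph of odd girth at least $2k+3$ with $\delta(G)\ge cn$. I will build a homomorphism from $G$ to a graph $J$ whose size is bounded in terms of $\gamma$, such that $og(J)\ge 2k-1$ and $J$ contains no copy of $\Gamma$.

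\textbf{Setup.} First apply \cref{lem:almost-dom} to get a bounded set $D$ dominating all but $\varepsilon n$ vertices. Then apply \cref{lem:deg-regularity} with the partition $\mathcal P$ given by the distance classes $N^{i}(v)$, $v\in D$, $i\le k$; since $og(G)\ge 2k+3$, these classes are independent. This produces clusters $V_1,\dots,V_p$ and a reduced graph $R$ whose edges are the dense regular pairs. By the counting/embedding lemma together with \cref{lem:regular-few-sparse}, an odd cycle of length at most $2k+1$ in $R$ would produce one in $G$, so $og(R)\ge 2k+3$. Each $v\in V(G)$ gets a type $\tau(v)\subseteq[p]$, the set of clusters in which it has at least $\varepsilon|V_i|$ neighbours in $G'$; the vertices of $V_0$ and the few exceptional vertices are typed the same way. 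Let $J$ have vertex set the types, with $S\sim T$ whenever some pair of vertices of types $S,T$ is adjacent. This gives the homomorphism $v\mapsto\tau(v)$, and $|V(J)|\le 2^p$ is bounded.

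\textbf{Odd girth of $J$.} A closed odd walk of length at most $2k-3$ in $J$ lifts to a walk in $G$ that may ``jump'' between two vertices of equal type. Two vertices of the same type share at least $\varepsilon|V_i|$ neighbours in some cluster, so by regularity each jump can be bridged by a path of length $2$. One jump therefore lengthens the cycle by at most $2$, and I need to show that at most two jumps are needed, giving an odd cycle of length at most $2k+1$ in $G$, a contradiction. Making the count work (the loss is exactly $4$, matching \cref{prop:construction}) may require refining the types by distance classes; this part looks routine.

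\textbf{Main obstacle: excluding $\Gamma$.} Suppose $J$ contains a copy of $\Gamma$ on types $T_h$, $h\in V(\Gamma)$, and pick representatives $v_h$. Since every two vertices of $\Gamma$ lie on a cycle of length $2k-1$ (\cref{lem:Myc-2-vtces-odd-cycle}), lifting such cycles to $G$ will show, I expect, that for $h\ne h'$ the sets $N(v_h)$, $N(v_{h'})$ and the corresponding second neighbourhoods interact in only $o(n)$ vertices; otherwise we would close an odd cycle of length at most $2k+1$ in $G$. Within each $h$ the aim is to recover three pairwise almost-disjoint sets of size about $cn$: $N(v_h)$, $N(w_h)$ for a suitable neighbour $w_h$, and the neighbourhood of a third vertex. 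This mirrors the sets $Y_h$, $X_h\cup Z_h$ and the ``hemisphere'' structure of the construction. Summing over $h$ gives $3m\cdot cn-o(n)>n$, a contradiction.

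The hardest step is producing these three disjoint sets of size $cn$ per vertex of $\Gamma$ and controlling their overlaps across different $h$. My first attempt would use, for each $h$, the vertex $v_h$, a neighbour $u_h$ of $v_h$, and a vertex $z_h$ at distance $2$ from $v_h$ whose type is forced by the cycle structure. I would then argue disjointness through parity of distances, which works because every odd cycle in $G$ has length at least $2k+3$.
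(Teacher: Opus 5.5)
Your skeleton (regularity, typing vertices by the clusters they see, and a count of three disjoint pieces of relative size about $c$ per vertex of $\Gamma$) matches the paper's. However, both load-bearing steps are missing, and the mechanisms you propose for them would fail.

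\textbf{Lifting walks from $J$ to $G$.} Two vertices of the same type need not have a common neighbour. Having at least $\varepsilon|V_i|$ neighbours in the same cluster says nothing about sharing them, and in the key configuration below the two vertices provably share none. The only cheap bridge has length $4$: it goes through a cluster and a common neighbour of that cluster, which is why clusters should sit inside single neighbourhoods $N(v)$, $v\in D$. Moreover, a closed walk $T_0T_1\cdots T_\ell=T_0$ in $J$ is only witnessed by edges $a_ib_{i+1}$ of $G$ with $a_i,b_i$ of type $T_i$. So a bridge from $b_i$ to $a_i$ is needed at \emph{every} $i$, not at two places, and the resulting closed walk in $G$ has length about $5\ell$, which contradicts nothing. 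With the plain type quotient you can therefore prove neither $og(J)\ge 2k-1$ nor the walk statements that the $\Gamma$ step needs. The missing idea is the paper's iterated refinement. Put $\mathcal R^1=\mathcal Q$, and for $r\le k$ let $\mathcal R^r$ split each part of $\mathcal R^{r-1}$ according to which $\mathcal R^{r-1}$-parts its vertices' neighbourhoods meet; then map $G$ to $G/\mathcal R^k$. A closed walk of length $2k-1$ in this quotient lifts to a genuine walk of length $2k-1$ in $G$ between two vertices of the same $\mathcal Q$-part: start from a real edge in its middle and extend outwards one refinement level at a time. Walks of length at most $2k-2$ between two parts lift the same way. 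Only one bridge of length $4$ is ever used, which is where the loss of exactly $4$ comes from.

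\textbf{The count.} Taking $z_h$ at distance $2$ from $v_h$ gives no disjointness, since $N(z_h)$ and $N(v_h)$ may nearly coincide. Parity alone also cannot separate your third set: if $w_h\in N(v_h)$ and $z_h$ is joined to $v_h$ by an odd walk, a common neighbour of $w_h$ and $z_h$ only closes an \emph{even} walk. The argument has to work at the cluster level.
From the lifted $(2k-1)$-walk you get $x_h,y_h$ of the same type with no common neighbour. Since they have the same support $S_h$, the clusters of $S_h$ carry two disjoint neighbourhoods, so $|S_h|\gtrsim 2cp$; the paper phrases this as $\mu_i(Q_j)\le 1/2$ for all $i$. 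The third piece is the set $S^2_h$ of clusters joined in $G'$ to a cluster of $S_h$. It has size $\gtrsim cp$ because every vertex of a cluster in $S_h$ has degree at least $cn$. It is disjoint from $S_h$ because a $G'$-edge between two clusters of $S_h$ would, by regularity, put an edge inside $N(y)$ for every $y$ of that type, i.e.\ a triangle.
For $h\ne h'$ you need both an odd and an even walk of length at most $2k-2$ between the two types, obtained from \cref{lem:Myc-2-vtces-odd-cycle} after lifting. The odd walk plus a bridge of length $4$ separates $S_h$ from $S_{h'}$, and, with \cref{lem:regular-few-sparse}, $S^2_h$ from $S^2_{h'}$. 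The even walk plus a walk of length $3$ through a regular pair forbids $G'$-edges between clusters of $S_h$ and $S_{h'}$, which separates $S_h$ from $S^2_{h'}$.
All of this must be phrased via supports shared by every vertex of a type, not via neighbourhoods of fixed representatives $v_h$, because the lifted walks end at other vertices of those types. Only then does $3m\,(c-O(\eta))\,p>p$ give the contradiction.
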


Before presenting the proof of \cref{prop:proof-upper-bound} in detail, we give an overview of the idea in the special case of \cref{thm:threshold-triangle} (i.e.\ $k=2,t=3$). The proof can be seen as going in the opposite direction from the lower-bound construction. There, the graph is built from three pieces, each having minimum degree close to one third of its size, which leads to the threshold $1/9$. The upper bound follows by showing that any obstruction would give rise to an analogous three-part structure, which cannot fit once the relative minimum degree exceeds $1/9$.

We begin by finding a bounded set which dominates almost all vertices, and partition the dominated vertices so that any two vertices in the same part have a common neighbor. We then apply the Regularity Lemma, obtaining a bounded number of regularity classes and a cleaned subgraph $G'\subseteq G$ in which every pair of classes is either empty or regular of positive density, while decreasing each degree by only a small amount. For each vertex, we record approximately how many neighbors it has in each regularity class in $G'$, and group together vertices with the same profile. The odd girth assumption yields that every profile class is an independent set. Finally, we refine this partition once more so that short walks in the quotient can be lifted to short walks in $G$.

Suppose that the resulting quotient contains a triangle, with corresponding parts $Q_1,Q_2,Q_3$. For each $Q_j$, consider its \emph{support}, namely the set of regularity classes in which vertices of $Q_j$ have positive density in $G'$. The fact that $G$ contains no triangles or $5$-cycles implies that these three supports are pairwise disjoint, and moreover that there are no edges in $G'$ between any two classes belonging to their union. For each $j$, consider also the set of regularity classes adjacent in $G'$ to the support of $Q_j$. Again using the absence of short odd cycles, these three sets are pairwise disjoint. Since there are no edges between the supports, they are also disjoint from all three supports. Thus we obtain six pairwise disjoint sets of regularity classes.

The minimum degree condition, together with the odd girth assumption, implies that each support contains slightly more than $2/9$ of all regularity classes, while each of the three additional sets contains slightly more than $1/9$. Their total size is therefore greater than the total number of regularity classes, a contradiction.

This also explains the connection with the lower-bound construction: at the threshold $1/9$, three pieces of this type can fit together, while above $1/9$ the counting argument rules this out. The proof below carries out the same argument in the general setting of \cref{prop:proof-upper-bound}. The main additional difficulty is that, in the general case, the relevant obstruction is no longer a triangle, and the contradictions with the odd-girth condition may involve walks of length up to $2k-1$. We therefore iterate the refinement procedure sufficiently many times to ensure that all such short walks in the quotient can be lifted to $G$, inspired by similar arguments used in \cite{ES20}.
Roughly speaking, the role of the three vertices of the triangle is then played by the $s$ vertices of the corresponding Mycielskian: each gives a support of size slightly more than $2/(3s)$ and a disjoint neighborhood of size slightly more than $1/(3s)$, leading to the same counting contradiction. Again, this mirrors the structure of the lower-bound constructions, where each vertex of the obstruction gives rise to one such support-neighborhood pair.

\begin{proof}[Proof of \cref{prop:proof-upper-bound}.]
    Let $k \geq 2, t \geq 3$ and $\varepsilon > 0$.
    Let $s = |V(\Gamma_{k-2}^t)|$.
    Let $G$ be an $n$-vertex $\{C_3, C_5, \ldots, C_{2k+1}\}$-free graph with minimum degree at least $\left(\frac{1}{3s}+\varepsilon\right)n$.
    We prove that $G$ has a homomorphism to a $\{C_3, \ldots, C_{2k-3}, \Gamma_{k-2}^t\}$-free graph whose number of vertices only depends on $k, t, \varepsilon$.
    
    Let $\eta \coloneqq \varepsilon/7$.
    By \cref{lem:almost-dom}, there exists a set $D \subseteq V(G)$ of size at most $\gamma \coloneqq \lceil3s\log(1/\eta)\rceil$ which dominates all but at most $\eta \cdot n$ vertices of $G$.
    Let $H$ be the subgraph of $G$ induced by the vertices that are dominated by $D$ and let $\mathcal{P}$ be a partition of $V(H)$ into at most $\gamma$ parts such that any two vertices in a common part have a common neighbor: assign each dominated vertex to one chosen neighbor in $D$.
    Applying \cref{lem:deg-regularity} to $H$ with $\varepsilon = \eta$ and $\zeta = 2\eta$ (and adding $V(G) \setminus V(H)$ to $V_0$ and keeping all the edges incident to these vertices), there exists some $M=M(\eta,\gamma) = M(k, t, \varepsilon)$ and a partition of $V(G)$ into sets $V_0, \ldots, V_p$ and a spanning subgraph $G'$ of $G$ with the following properties: \begin{itemize}
        \item $p \leq M$, 
        \item $|V_0| \leq 2\eta \cdot n$,
        \item For every $i \geq 1$, any two vertices in $V_i$ have a common neighbor in $G$,
        \item There exists $m \in \mathbb{N}$ such that $|V_i| = m$ for every $i \geq 1$,
        \item For every $v \in V(G)$, we have $d_{G'}(v) \geq \left(\frac{1}{3s}+\varepsilon-3\eta\right)n$,
        \item $V_i$ is an independent set in $G'$ for all $i \geq 1$, 
        \item In $G'$, for all $i \neq j \in [p]$, the pair $(V_i, V_j)$ is $\eta$-regular, with density either $0$ or greater than $2\eta$.
    \end{itemize}

    For every $v \in V(G)$ and every $i \in [p]$, let $\mu_i(v) \in \{0, \eta, 2\eta, \ldots, \lfloor1/\eta\rfloor\eta\}$ such that $|N_{G'}(v) \cap V_i| \in [\mu_i(v) \cdot m, (\mu_i(v) + \eta) \cdot m]$.
    Informally, the vector $\mu(v)$ records the approximate density of $v$ on each of the regularity classes $V_1, \ldots, V_p$. 
    Let $\mathcal{Q}$ be the partition of the vertices of $G$ according to their vector $(\mu_1(v), \ldots, \mu_p(v))$.
    Note that $\mathcal{Q}$ need not refine the previous partitions.
    For every part $Q \in \mathcal{Q}$, all vertices $v \in Q$ have the same vector $(\mu_1(v), \ldots, \mu_p(v))$. We denote this vector by $\mu(Q) = (\mu_1(Q), \ldots, \mu_p(Q))$.
    The \emph{support} of $Q$ is the set $\{i \in [p] : \mu_i(Q) > 0\}$, which we denote by $\supp(Q)$.
    Observe that $\supp(Q)$ corresponds to a set of regularity classes.
    Note also that $M, \gamma$ and $\eta$ are independent of $n$, so $|\mathcal{Q}|$ only depends on $\varepsilon$, $k$ and $t$.

    \begin{claim}\label{cl:sum-mu}
        For every $Q \in \mathcal{Q}$, we have $\sum_{i \in [p]} \mu_i(Q) \geq \left(\frac{1}{3s}+\varepsilon-6\eta\right)p$.
    \end{claim}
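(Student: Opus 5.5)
Fix $Q \in \mathcal{Q}$ and pick any $v \in Q$; since every vertex of $Q$ has the vector $\mu(Q)$, it suffices to show $\sum_{i\in[p]}\mu_i(v) \geq \left(\frac{1}{3s}+\varepsilon-6\eta\right)p$. The plan is to sum the defining inequality $|N_{G'}(v)\cap V_i| \le (\mu_i(v)+\eta)m$ over $i\in[p]$, and compare the result with the lower bound on $d_{G'}(v)$. The degree of $v$ in $G'$ splits as $d_{G'}(v) = |N_{G'}(v)\cap V_0| + \sum_{i\in[p]}|N_{G'}(v)\cap V_i|$. This gives
\[
\left(\tfrac{1}{3s}+\varepsilon-3\eta\right)n \le d_{G'}(v) \le |V_0| + \sum_{i\in[p]} (\mu_i(v)+\eta)m \le 2\eta n + m\sum_{i\in[p]}\mu_i(v) + \eta pm .
\]

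Next I use $pm \le n$ (the classes $V_1,\dots,V_p$ are disjoint subsets of $V(G)$). This gives
\[
m\sum_{i\in[p]}\mu_i(v) \ge \left(\tfrac{1}{3s}+\varepsilon-6\eta\right)n .
\]
Dividing by $m$ and using $n \ge pm$, i.e.\ $n/m \ge p$, yields the claim. This step needs the coefficient $\frac{1}{3s}+\varepsilon-6\eta$ to be nonnegative, which holds since $\eta=\varepsilon/7$.

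The only point requiring care is the bookkeeping of the error terms. The $V_0$ contribution costs $2\eta n$, the rounding in $\mu_i$ costs at most $\eta pm\le\eta n$, and the degree loss in passing to $G'$ accounts for the $3\eta$ already present. These total exactly $6\eta$. There is no real obstacle: the argument is a direct double count, and in the last step the direction of the inequality is favourable because $n \ge pm$.
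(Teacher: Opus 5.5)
Your proof is correct and follows essentially the paper's argument. Both sum the rounding bound $|N_{G'}(v)\cap V_i|\le(\mu_i(v)+\eta)m$ over $i\in[p]$, absorb the neighbours in $V_0$ via $|V_0|\le 2\eta n$, and finish using $n/m\ge p$ together with nonnegativity of $\frac{1}{3s}+\varepsilon-6\eta$; the only cosmetic difference is that you replace $\eta pm$ by $\eta n$ before dividing by $m$, whereas the paper keeps the $-\eta p$ term until the end.
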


    \begin{subproof}{\cref{cl:sum-mu}}
        Let $v \in Q$, so $\sum_{i \in [p]} \mu_i(Q) = \sum_{i \in [p]} \mu_i(v)$.
        For every $i \in [p]$, by definition of $\mu_i(v)$ we have $\mu_i(v) \cdot m \geq |N_{G'}(v) \cap V_i| - \eta \cdot m$.
        Thus, using that $mp \leq n$, we have \begin{align*}
            \sum_{i \in [p]} \mu_i(Q) &= \sum_{i \in [p]} \mu_i(v) \\
                &\geq \sum_{i \in [p]} \left(\frac{|N_{G'}(v) \cap V_i|}{m} - \eta\right) \\
                &= \left(\sum_{i \in [p]} \frac{|N_{G'}(v) \cap V_i|}{m}\right) - \eta p \\
                &= \frac{1}{m} \sum_{i \in \{0, \ldots, p\}} |N_{G'}(v) \cap V_i| - \frac{|N_{G'}(v) \cap V_0|}{m} - \eta p \\
                &\geq \frac{|N_{G'}(v)|}{m} - \frac{|V_0|}{m} - \eta p \\
                &\geq \left(\frac{1}{3s}+\varepsilon-3\eta\right) \cdot \frac{n}{m} - 2\eta\cdot\frac{n}{m} - \eta p \\
                &\geq \left(\frac{1}{3s} + \varepsilon - 6\eta\right)p. 
        \end{align*}
    \end{subproof}

    \begin{claim}\label{cl:Q-stable}
        Each $Q \in \mathcal{Q}$ is an independent set in $G$.
    \end{claim}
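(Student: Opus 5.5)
Suppose, for contradiction, that $u, v \in Q$ are adjacent in $G$. Since $u$ and $v$ have the same profile $\mu(Q)$, they share the same support, and we aim to produce an odd closed walk of length at most $2k+1$ in $G$. The number $\sum_i \mu_i(Q)$ is positive by \cref{cl:sum-mu}, since $\frac{1}{3s}+\varepsilon-6\eta>0$ when $\eta=\varepsilon/7$. So there is an index $i \in \supp(Q)$ with $\mu_i(Q) \geq \eta$, meaning both $u$ and $v$ have at least $\eta m$ neighbors in $V_i$ in $G'$. Let $A = N_{G'}(u) \cap V_i$ and $B = N_{G'}(v) \cap V_i$.

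The goal is to find $a \in A$ and $b \in B$ joined by a path of length $2$ in $G$, or with $a=b$. Either outcome yields a closed walk $u\,a \cdots b\,v\,u$ of length $3$ (if $a=b$) or $5$. Both are odd and of length at most $2k+1$, since $k\ge 2$. By \cref{obs:odd-walk-cycle}, this gives an odd cycle of length at most $5$ in $G$, a contradiction.

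To find such $a, b$, I use the regularity structure. The class $V_i$ has some neighboring class $V_j$ with positive density, because $u$ itself has $G'$-neighbors in $V_i$. Hence $V_i$ is incident to edges of $G'$, and by \cref{lem:deg-regularity} some pair $(V_i,V_j)$ is $\eta$-regular with density greater than $2\eta$. The sets $A$ and $B$ have size at least $\eta m = \eta|V_i|$, so the regularity of $(V_i,V_j)$ applies to each of them. By \cref{lem:regular-few-sparse}, fewer than $\eta m$ vertices of $V_i$ have fewer than $\eta m$ neighbors in $V_j$.

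Here the bounds are tight, since $|A| \geq \eta m$ only. I would instead argue directly with regularity. Let $N_A$ be the set of vertices of $V_j$ with a neighbor in $A$. Because $d(A, V_j) \geq d(V_i,V_j)-\eta > \eta$, the set $N_A$ is nonempty. More strongly, suppose $|N_A| < \eta m$. Then the set $W=V_j\setminus N_A$ has size at least $(1-\eta)m \ge \eta m$ and satisfies $d(A,W)=0$, which contradicts regularity since $d(V_i,V_j)>2\eta$. So $|N_A| \geq \eta m$, and the same reasoning gives $d(B,N_A) \geq d(V_i,V_j)-\eta>0$. Hence some $b\in B$ is adjacent to some $w \in N_A$, and $w$ is adjacent to some $a\in A$. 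The walk $u\,a\,w\,b\,v\,u$ is then a closed walk of length $5$, which is the required contradiction.

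The main obstacle is making the regularity inequalities work at the threshold $|A|\ge \eta m$ exactly, rather than strictly above it. If this is tight, I would use the slack from the bound $\mu_i(Q)\ge\eta$: either pick the index $i$ maximizing $\mu_i(Q)$, which by \cref{cl:sum-mu} satisfies $\mu_i(Q)\ge \frac{1}{3sp}$, or simply note that the definition of $\mu_i$ gives at least $\mu_i m \ge \eta m$ neighbors, which is exactly what $\eta$-regularity needs.
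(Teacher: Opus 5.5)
Your route differs from the paper's. The paper's proof is a one-liner. The regularity partition was built on top of the partition $\mathcal P$ coming from the almost-dominating set $D$, so any two vertices of a class $V_i$ ($i\ge 1$) have a common neighbor in $G$. Hence if $x,y\in Q$ are adjacent and $i\in\supp(Q)$, then $x,y$ have neighbors $a,b\in V_i$, which have a common neighbor $w$, and $x\,a\,w\,b\,y\,x$ is a closed walk of length $5$.

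You do not use this built-in property. Instead you manufacture the path $a\,w\,b$ from the $\eta$-regularity of a dense pair $(V_i,V_j)$, in the same spirit as the paper's later argument in \cref{cl:supp2-disjoint}. Your regularity computation is sound. You have $|A|,|B|\ge\eta m$, and the definition of $\eta$-regularity only requires $|U'|\ge\eta|U|$, so there is no threshold issue and your final paragraph is unnecessary. (The bound $\frac{1}{3sp}$ quoted there is also not what \cref{cl:sum-mu} gives, though this is irrelevant.) Your route shows the claim, and likewise \cref{cl:disjoint-supports}, follows from the regularity structure alone, without the dominating-set preprocessing. The cost is that you must exhibit a dense partner class $V_j$.

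That is where the gap is. You infer the existence of $V_j$ from the fact that $u$ has $G'$-neighbors in $V_i$. But $u$ may lie in the exceptional set $V_0$, and edges of $G'$ between $V_0$ and $V_i$ are not constrained by \cref{lem:deg-regularity} at all. So ``$V_i$ is incident to edges of $G'$'' does not yield a $j\in[p]$ with $(V_i,V_j)$ of positive density. (If $u\in V_j$ with $j\ge 1$, your reasoning works with that $j$.)

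The fix is to use the minimum degree of $G'$. Any $a\in V_i$ has $d_{G'}(a)\ge(\frac{1}{3s}+\varepsilon-3\eta)n$. At most $|V_0|\le 2\eta n$ of these neighbors lie in $V_0$, and none lie in $V_i$ since it is independent in $G'$. Since $\frac{1}{3s}+\varepsilon-5\eta>0$, the vertex $a$ has a $G'$-neighbor in some $V_j$ with $j\in[p]\setminus\{i\}$. That pair then has positive density, hence density greater than $2\eta$, and is $\eta$-regular. With this repair your argument is complete.
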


    \begin{subproof}{\cref{cl:Q-stable}}
        By contradiction, suppose that some $Q \in \mathcal{Q}$ is not an independent set in $G$, and let $x, y \in Q$ be adjacent in $G$.
        By \cref{cl:sum-mu}, there exists $i \in [p]$ such that $\mu_i(Q) > 0$, so $\mu_i(x), \mu_i(y) > 0$.
        Thus, both $x$ and $y$ have a neighbor in $V_i$.
        However, any two vertices in $V_i$ have a common neighbor in $G$, so $G$ contains a closed walk of length $5$, a contradiction since $G$ has odd girth greater than $2k+1 \geq 5$.
    \end{subproof}
    
    Let $\mathcal{R}^1 = \mathcal{Q}$ and define partitions $\mathcal{R}^2, \ldots, \mathcal{R}^k$ of $V(G)$ inductively as follows.
    Let $r \in \{2, \ldots, k\}$ and suppose that $\mathcal{R}^{r-1}$ was defined.
    Then, let $\mathcal{R}^{r}$ be the partition of $V(G)$ where two vertices are in the same part of $\mathcal{R}^r$ if and only if they are in the same part of $\mathcal{R}^{r-1}$ and their neighbors in $G$ belong to the same parts of $\mathcal{R}^{r-1}$.
    Note that $|\mathcal{R}^r| \leq |\mathcal{R}^{r-1}| \cdot 2^{|\mathcal{R}^{r-1}|}$, so $|\mathcal{R}^k|$ only depends on $\varepsilon$, $k$ and $t$.

    Let $G/\mathcal{R}^k$ be the graph on vertex set $\mathcal{R}^k$ where $R^k_i, R^k_j \in \mathcal{R}^k$ are adjacent if and only if there is an edge between $R^k_i$ and $R^k_j$ in $G$.
    Since $\mathcal{R}^k$ refines $\mathcal{Q}$, there is a natural homomorphism from $G$ to $G/\mathcal{R}^k$ by \cref{cl:Q-stable}, and the size of $G/\mathcal{R}^k$ is upper bounded by a function of $\varepsilon$, $t$ and $k$ only.
    We now show that $G/\mathcal{R}^k$ is $\{C_3, \ldots, C_{2k-3}, \Gamma_{k-2}^t\}$-free, which will conclude the proof. 
    By contradiction, suppose not.
    Then, there exist $R^k_1, \ldots, R^k_{s} \in \mathcal{R}^k$ with the following properties: \begin{itemize}
        \item For all $i \in [s]$, there exists a closed walk of length $2k-1$ in $G/\mathcal{R}^k$ that visits $R^k_i$,
        \item For all $i \neq j \in [s]$, there exist an even walk and an odd walk between $R^k_i$ and $R^k_j$ in $G/\mathcal{R}^k$, both of length at most $2k-2$.
    \end{itemize}
    Indeed, if $G/\mathcal{R}^k$ contains an odd cycle of length at most $2k-3$, we can take all $R^k_i$ equal to some part of $\mathcal{R}^k$ that belongs to such a cycle, and if $G/\mathcal{R}^k$ contains $\Gamma_{k-2}^t$, we can take the $R^k_i$ as the vertices of a copy of $\Gamma_{k-2}^t$ in $G/\mathcal{R}^k$. In the latter case, the two properties follow immediately from \cref{lem:Myc-2-vtces-odd-cycle}.

    For every $r \in [k]$ and every $i \in [s]$, let $R^r_i$ be the part of $\mathcal{R}^r$ that contains $R^k_i$.
    For convenience, let $Q_i = R^1_i$ for every $i \in [s]$.

    \begin{claim}\label{cl:exists-walk-back}
        For every $i \in [s]$, there exists a walk of length $2k-1$ in $G$ between two vertices of $Q_i$.
    \end{claim}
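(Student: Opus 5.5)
The plan is to lift the closed walk of length $2k-1$ in $G/\mathcal{R}^k$ through $R^k_i$ to a walk in $G$, using the iterated refinement. The key property of the refinement is this: if $R, R'$ are parts of $\mathcal{R}^r$ with $r\ge 2$ and some vertex of $R$ has a $G$-neighbor in $R'$, then every vertex of $R$ has a $G$-neighbor in the part of $\mathcal{R}^{r-1}$ containing $R'$. Indeed, vertices in the same part of $\mathcal{R}^r$ see exactly the same parts of $\mathcal{R}^{r-1}$ among their neighbors.

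Write the closed walk in $G/\mathcal{R}^k$ as $R^k_i = P_0, P_1, \ldots, P_{2k-1} = R^k_i$, where consecutive parts are joined by an edge of $G$. Start from an arbitrary vertex $v_0 \in P_0 \subseteq R^k_i$. We build $v_0, v_1, \ldots, v_{2k-2}$ greedily so that $v_j$ lies in the part of $\mathcal{R}^{k-j}$ containing $P_j$ for $j \le k-1$.

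\begin{itemize}
\item Step from $v_0$ to $v_1$. Since $P_0P_1$ is an edge in the quotient, some vertex of $P_0$ has a neighbor in $P_1$. Hence $v_0$ has a neighbor $v_1$ in the $\mathcal{R}^{k-1}$-part of $P_1$.
\item Step from $v_1$ to $v_2$. Now $v_1$ and the original endpoint in $P_1$ lie in the same $\mathcal{R}^{k-1}$-part, and that endpoint's $\mathcal{R}^{k-1}$-part has a vertex adjacent to $P_2$. So $v_1$ has a neighbor in the $\mathcal{R}^{k-2}$-part of $P_2$.
\end{itemize}

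Each step loses one level of refinement, so with only $k$ levels we cannot go all the way around the walk. We handle this by lifting from both ends at once. Split the closed walk at its midpoint: one half runs from $P_0$ to $P_{k-1}$ (length $k-1$), the other from $P_{2k-1}=P_0$ backwards to $P_{k}$ (length $k-1$), plus the middle edge $P_{k-1}P_k$.

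Lift both halves starting from arbitrary vertices $v_0, v'_0 \in R^k_i$. After $k-1$ steps the first half reaches a vertex $v_{k-1}$ in the $\mathcal{R}^{1}$-part of $P_{k-1}$. The second half similarly reaches $v'_{k-1}$ in the $\mathcal{R}^1$-part of $P_k$. This gives two walks of length $k-1$ whose end parts in $\mathcal{R}^1=\mathcal{Q}$ are $Q \ni v_{k-1}$ and $Q' \ni v'_{k-1}$, and $Q, Q'$ are joined by an edge of $G$.

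The remaining difficulty is to connect $v_{k-1}$ and $v'_{k-1}$ by a single edge, or to close up with the right parity, since the refinement gives no more control at level $\mathcal{Q}$. Here I would adjust the split to use one more level: lift one half for $k-1$ steps and the other for $k-2$ steps. The shorter half's endpoint then lies in an $\mathcal{R}^2$-part, so it has a neighbor in the $\mathcal{R}^1$-part of the next vertex. That yields walks of total length $(k-1)+(k-2)+1 = 2k-2$ ending in a common part.

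Concretely, choose the split so that both walks end in the same part $Q^\ast \in \mathcal{Q}$: a vertex $a$ at the end of a walk of length $k-1$ from $v_0$, and a vertex $b$ at the end of a walk of length $k-1$ from $v'_0$, with $a, b \in Q^\ast$. The parity of $(k-1)+(k-1)$ is even, but we need an odd total. So I would instead aim for the bound $(k-1)+k$: one walk of length $k-1$ and one of length $k$, both ending in $Q^\ast$. Concatenating them via the pair $a, b$ is not yet a walk, which is exactly why the claim is stated with endpoints in $Q_i$ rather than as a closed walk.

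So the claim should be reorganized as follows. Starting at any $v_0 \in R^k_i$, lift all $2k-1$ steps greedily, dropping one refinement level per step. For the first $k-1$ steps the refinement guarantees continuation. After that we are at level $\mathcal{R}^1=\mathcal{Q}$, and the remaining steps must be handled differently.

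I expect this step to be the main obstacle. What I would try first is to use the regularity structure at level $\mathcal{Q}$: vertices in a common $\mathcal{Q}$-part have identical density profiles $\mu$. So if $Q \sim Q'$ via some edge, every vertex of $Q$ plausibly has a neighbor in $Q'$. This should follow from regularity and positive densities, via \cref{lem:regular-few-sparse}, possibly up to an exceptional set.

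Alternatively, the lifting can be run simultaneously from both ends of the cycle through $R^k_i$. Each side then needs only about $k-1$ lifts, and the two sides meet along the single middle edge of $G$ witnessing the quotient adjacency. That middle edge joins a vertex of one part to a vertex of the other, and its two endpoints are the lifted ends. The resulting walk in $G$ has length $(k-1)+1+(k-1)=2k-1$ and its two endpoints lie in $R^k_i \subseteq Q_i$, which is precisely the claim.
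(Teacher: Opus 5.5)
\textbf{There is a genuine gap: you lift in the wrong direction, and the argument never closes.} Your single-step observation is correct: all vertices of an $\mathcal{R}^r$-part, $r\ge 2$, have $G$-neighbors in exactly the same $\mathcal{R}^{r-1}$-parts. The problem is in your final paragraph, where you lift inward from two arbitrary vertices $v_0,v_0'\in R^k_i$. After $k-1$ steps you only know that $v_{k-1}$ lies in the $\mathcal{R}^1$-part of $P_{k-1}$ and $v'_{k-1}$ lies in the $\mathcal{R}^1$-part of $P_k$. The quotient edge $P_{k-1}P_k$ is witnessed by \emph{some} edge of $G$ between $P_{k-1}$ and $P_k$, but nothing forces its endpoints to be $v_{k-1}$ and $v'_{k-1}$. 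The sentence ``its two endpoints are the lifted ends'' simply asserts away the obstacle you correctly identified a few paragraphs earlier; the unequal splits you tried in between fail for the same reason.

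The regularity fallback does not rescue this. Parts of $\mathcal{Q}$ are defined by the approximate density vectors $\mu$ toward the regularity classes $V_1,\dots,V_p$ in $G'$. These vectors say nothing about adjacency from one $\mathcal{Q}$-part to another, since $Q'$ is not a regularity class. So there is no reason every vertex of $Q$ should have a neighbor in $Q'$. In fact the claim needs no regularity at all, only the refinement structure.

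\textbf{The missing idea is to start at the middle edge and lift outward.} Pick an actual edge $ab$ of $G$ with $a\in P_{k-1}$ and $b\in P_k$; both are $\mathcal{R}^k$-parts. The outward step works as follows. Suppose $a_j$ lies in the $\mathcal{R}^r$-part containing $P_j$, with $r\ge 2$. Some vertex of $P_j$ has a $G$-neighbor in $P_{j-1}$, so $a_j$ has a $G$-neighbor $a_{j-1}$ in the $\mathcal{R}^{r-1}$-part containing $P_{j-1}$.
\begin{itemize}
\item Backward: start from $a_{k-1}=a$ at level $r=k$. After $k-1$ steps you reach $a_0$ in the $\mathcal{R}^1$-part of $P_0=R^k_i$, i.e.\ in $Q_i$.
\item Forward: symmetrically, start from $a_k=b$ and reach $a_{2k-1}\in Q_i$.
\end{itemize}
The walk $a_0a_1\cdots a_{2k-1}$ has length $(k-1)+1+(k-1)=2k-1$. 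This is the paper's proof.

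It also explains why the claim only asks for endpoints in $Q_i=R^1_i$, not in $R^k_i$ as you state. The endpoints need only level-$1$ information. Starting from an edge between two $\mathcal{R}^k$-parts gives exactly $k-1$ levels of refinement to spend on each side.
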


    \begin{subproof}{\cref{cl:exists-walk-back}}
        By symmetry, it suffices to prove it for $i = 1$.
        Let $R^k_1 = S^k_1S^k_2\ldots S^k_{2k-1}S^k_{2k} = R^k_1$ be a closed walk in $G/\mathcal{R}^k$, where $S^k_i \in \mathcal{R}^k$ for every $i \in [2k]$.
        For every $r \in [k]$ and every $i \in [2k]$, let $S^r_i$ be the part of $\mathcal{R}^r$ that contains $S^k_i$.
        Since $S^k_{k}$ and $S^k_{k+1}$ are adjacent in $G/\mathcal{R}^k$, there exist $v_{k} \in S^k_{k}$ and $v_{k+1} \in S^k_{k+1}$ that are adjacent in $G$.
        Furthermore, there is an edge in $G$ between $S^k_{k}$ and $S^k_{k-1}$ and between $S^k_{k+1}$ and $S^k_{k+2}$.
        By definition of $\mathcal{R}^k$, this implies that $v_{k}$ is adjacent in $G$ to some vertex $v_{k-1} \in S^{k-1}_{k-1}$ and $v_{k+1}$ is adjacent in $G$ to some vertex $v_{k+2} \in S^{k-1}_{k+2}$.
        After repeating this process $k-2$ extra times, we finally obtain a vertex $v_1 \in R^{1}_1$ and a vertex $v_{2k} \in R^{1}_1$.
        Then, $v_1, v_{2k} \in Q_1$ and they are connected by a walk of length $2k-1$ in $G$.
    \end{subproof}

    A similar proof shows that the following claim also holds.

    \begin{claim}\label{cl:exists-walk-between}
        For all $i \neq j \in [s]$, there exist an even walk of length at most $2k-2$ in $G$ between a vertex of $Q_i$ and a vertex of $Q_j$, and an odd walk of length at most $2k-2$ in $G$ between a vertex of $Q_i$ and a vertex of $Q_j$.
    \end{claim}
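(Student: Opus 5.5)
We adapt the peeling argument from the proof of \cref{cl:exists-walk-back}, now starting from an open walk rather than a closed one. Fix $i \neq j \in [s]$. By the second property of $R^k_1, \ldots, R^k_s$, there is a walk $R^k_i = S^k_0 S^k_1 \ldots S^k_\ell = R^k_j$ in $G/\mathcal{R}^k$ for some $\ell \leq 2k-2$ of each parity. It suffices to lift such a walk to a walk of the same length $\ell$ in $G$ from a vertex of $Q_i = R^1_i$ to a vertex of $Q_j = R^1_j$. As before, let $S^r_a$ denote the part of $\mathcal{R}^r$ containing $S^k_a$.

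The key property of the refinement is this. If $v \in S^r_a$ (with $r \geq 2$) and $S^r_a$ has an edge in $G$ to some vertex of $S^{r-1}_{b}$, then every vertex of $S^r_a$, and in particular $v$, has a neighbor in $S^{r-1}_b$. This holds because all vertices of a part of $\mathcal{R}^r$ see the same collection of $\mathcal{R}^{r-1}$-parts. It also uses that $S^{r-1}_b$ is the $\mathcal{R}^{r-1}$-part containing $S^k_b$, and that an edge between $S^k_a$ and $S^k_b$ lies between $S^r_a$ and $S^{r-1}_b$.

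The lift proceeds in three steps.
\begin{itemize}
\item Choose a middle edge of the walk: set $c = \lfloor \ell/2 \rfloor$ and pick an edge $v_c v_{c+1}$ of $G$ with $v_c \in S^k_c$ and $v_{c+1} \in S^k_{c+1}$.
\item Extend outward one step at a time. A vertex $v_a \in S^r_a$ obtained after $k-r$ extension steps has a neighbor $v_{a-1} \in S^{r-1}_{a-1}$, or symmetrically $v_{a+2} \in S^{r-1}_{a+2}$ on the other side.
\item Stop after at most $k-1$ steps. Since $c \leq k-1$ and $\ell - c - 1 \leq k-1$, this number of steps never exhausts the refinement levels, and both endpoints are reached with level at least $1$.
\end{itemize}
This gives $v_0 \in S^{r}_0$ for some $r \geq 1$. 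Since $\mathcal{R}^r$ refines $\mathcal{R}^1$, it follows that $v_0 \in R^1_i = Q_i$, and likewise $v_\ell \in Q_j$.

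The resulting sequence $v_0 v_1 \ldots v_\ell$ is a walk in $G$ of length $\ell$. Applying this to both the even walk and the odd walk gives the claim. The case $\ell = 0$ cannot occur with $i \neq j$ as genuinely distinct parts. If the two chosen parts coincide, any single vertex of the common part gives a walk of length $0$ and the claim is trivial.

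The only real obstacle is the index bookkeeping: both half-walks must have length at most $k-1$. This is guaranteed by starting at the middle edge and by $\ell \leq 2k-2$, which matches the $k$ levels of refinement.
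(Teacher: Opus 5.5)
Your proposal is correct and takes essentially the same approach as the paper, which proves this claim by the middle-edge peeling argument of \cref{cl:exists-walk-back}; your choice $c=\lfloor \ell/2\rfloor$ together with $\ell\le 2k-2$ correctly keeps both half-walks within the $k$ refinement levels, so both endpoints land in $\mathcal{R}^1$-parts, i.e.\ in $Q_i$ and $Q_j$. You also handle the degenerate even case $\ell=0$ correctly. This case arises when all $R^k_i$ coincide, as in the short-odd-cycle case, and the paper leaves it implicit.
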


    \begin{claim}\label{cl:disjoint-supports}
        The sets $\supp(Q_1), \ldots, \supp(Q_{s})$ are pairwise disjoint.
    \end{claim}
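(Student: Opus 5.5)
The plan is to argue by contradiction and build a short closed odd walk in $G$. The ingredients are the odd walk from \cref{cl:exists-walk-between}, the fact that a positive coordinate of $\mu$ forces a neighbor in the corresponding regularity class, and the property that any two vertices of the same class $V_\ell$ have a common neighbor in $G$.

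Suppose that for some $i \neq j \in [s]$ there is an index $\ell \in \supp(Q_i) \cap \supp(Q_j)$, so that $\mu_\ell(Q_i) > 0$ and $\mu_\ell(Q_j) > 0$. Since all vertices of a part of $\mathcal{Q}$ share the same vector $\mu$, every vertex $v \in Q_i \cup Q_j$ satisfies $\mu_\ell(v) > 0$. Hence $|N_{G'}(v) \cap V_\ell| \geq \mu_\ell(v) m > 0$, so every such $v$ has a neighbor in $V_\ell$ in $G' \subseteq G$.

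By \cref{cl:exists-walk-between}, there exist $a \in Q_i$, $b \in Q_j$ and a walk $W$ in $G$ between $a$ and $b$ of odd length $\lambda \leq 2k-2$; since $\lambda$ is odd, in fact $\lambda \leq 2k-3$. By the previous paragraph, we can pick a neighbor $a' \in V_\ell$ of $a$ and a neighbor $b' \in V_\ell$ of $b$. By the construction of the partition (each $V_\ell$ with $\ell \geq 1$ lies inside one part of $\mathcal{P}$), $a'$ and $b'$ have a common neighbor $c$ in $G$; if $a'=b'$ we may take any neighbor $c$ of $a'$. Then $a\,a'\,c\,b'\,b$ is a walk of length $4$ from $a$ to $b$. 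Concatenating it with $W$ gives a closed walk of odd length $\lambda + 4 \leq 2k+1$. By \cref{obs:odd-walk-cycle}, $G$ then contains an odd cycle of length at most $2k+1$, which contradicts the assumption that $G$ is $\{C_3, C_5, \ldots, C_{2k+1}\}$-free.

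The only delicate point, which is minor, is that \cref{cl:exists-walk-between} provides the odd walk only between specific vertices $a \in Q_i$ and $b \in Q_j$. This causes no trouble because the support is a property of the whole part $Q$: every vertex of $Q_i$ and of $Q_j$ has a neighbor in $V_\ell$, in particular $a$ and $b$. The parity bound $\lambda \leq 2k-3$ is exactly what keeps the total length at most $2k+1$.
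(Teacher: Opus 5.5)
Your proof is correct and follows essentially the same route as the paper: you take the odd walk of length at most $2k-3$ from \cref{cl:exists-walk-between}, close it with a walk of length $4$ through the shared regularity class $V_\ell$ using the common-neighbor property, and obtain a closed odd walk of length at most $2k+1$. Your handling of the case $a'=b'$ and your use of $\mu_\ell(v)\geq \eta$ to get a neighbor in $V_\ell$ are fine details that the paper leaves implicit.
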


    \begin{subproof}{\cref{cl:disjoint-supports}}
        By contradiction, suppose not: there exist $j \neq j' \in [s]$ and $i \in \supp(Q_j) \cap \supp(Q_{j'})$.
        By \cref{cl:exists-walk-between}, there exist $x_j \in Q_j$ and $x_{j'} \in Q_{j'}$ that are connected by a walk in $G$ with an odd number of edges, and at most $2k-2$ edges (hence at most $2k-3$ edges).
        Since $i \in \supp(Q_j) \cap \supp(Q_{j'})$, we have $\mu_i(x_j), \mu_i(x_{j'}) > 0$ so both $x_j$ and $x_{j'}$ have a neighbor in the regularity class $V_i$.
        However, any two vertices in $V_i$ have a common neighbor, so $x_j$ and $x_{j'}$ are connected by a walk in $G$ with $4$ edges.
        This proves the existence of a closed odd walk of length at most $2k+1$ in $G$, a contradiction.
    \end{subproof}

    \begin{claim}\label{cl:large-supports}
        For every $j \in [s]$, we have $|\supp(Q_j)| \geq 2\left(\frac{1}{3s}+\varepsilon-6\eta\right)p$.
    \end{claim}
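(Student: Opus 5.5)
The plan is to combine \cref{cl:sum-mu} with an upper bound on each coordinate $\mu_i(Q_j)$. Specifically, I will show that $\mu_i(Q_j) \leq \frac12$ for every $i \in [p]$. Since $\mu_i(Q_j) = 0$ for $i \notin \supp(Q_j)$, this gives
\[
\sum_{i\in[p]} \mu_i(Q_j) = \sum_{i \in \supp(Q_j)} \mu_i(Q_j) \leq \tfrac12 |\supp(Q_j)| .
\]
Combined with the lower bound $\sum_{i} \mu_i(Q_j) \geq \left(\frac{1}{3s}+\varepsilon-6\eta\right)p$ from \cref{cl:sum-mu}, this yields $|\supp(Q_j)| \geq 2\left(\frac{1}{3s}+\varepsilon-6\eta\right)p$, as claimed.

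To bound $\mu_i(Q_j)$, I will use \cref{cl:exists-walk-back}. It provides two vertices $x, y \in Q_j$ joined in $G$ by a walk of length $2k-1$. Because $x$ and $y$ lie in the same part of $\mathcal{Q}$, they have the same vector, so $\mu_i(x) = \mu_i(y) = \mu_i(Q_j)$. By the definition of $\mu_i$, this gives $|N_{G'}(x) \cap V_i| \geq \mu_i(Q_j)\, m$ and $|N_{G'}(y) \cap V_i| \geq \mu_i(Q_j)\, m$.

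Now suppose, for contradiction, that $\mu_i(Q_j) > \frac12$. The two neighborhoods above are subsets of $V_i$, which has size $m$, and together they have size greater than $m$. Hence they intersect, so there is a vertex $z \in V_i$ adjacent in $G' \subseteq G$ to both $x$ and $y$. The path $x z y$ has length $2$. Concatenating it with the walk of length $2k-1$ from $y$ back to $x$ gives a closed walk of odd length $2k+1$ in $G$. By \cref{obs:odd-walk-cycle}, $G$ then contains an odd cycle of length at most $2k+1$, contradicting that $G$ is $\{C_3, C_5, \ldots, C_{2k+1}\}$-free. Therefore $\mu_i(Q_j) \leq \frac12$ for all $i$.

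The only delicate point is that the bound on $\mu_i$ must hold for every $i$, not just on average. This is exactly what the intersection argument provides, because $x$ and $y$ share the whole vector $\mu$ and not merely its sum.
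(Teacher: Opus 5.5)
Your proposal is correct and follows the paper's proof essentially verbatim. You bound each coordinate $\mu_i(Q_j)\le \frac12$ by finding a common neighbour in $V_i$ of the two endpoints of the walk of length $2k-1$ from \cref{cl:exists-walk-back}, which gives a closed walk of length $2k+1$; you then sum over $\supp(Q_j)$ and apply \cref{cl:sum-mu}.
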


    \begin{subproof}{\cref{cl:large-supports}}
        Let $j \in [s]$ and let $x_j, y_j \in Q_j$ be connected by a walk of length $2k-1$ in $G$; such vertices exist by \cref{cl:exists-walk-back}.
        We first show that $\mu_i(Q_j) \leq 1/2$ for every $i \in [p]$.
        By contradiction, suppose not. 
        Then, $\mu_i(x_j), \mu_i(y_j) > 1/2$ for some $i \in [p]$.
        Thus, $|N_{G'}(x_j) \cap V_i|, |N_{G'}(y_j) \cap V_i| > |V_i|/2$ so $x_j$ and $y_j$ have a common neighbor. Together with the walk of length $2k-1$ between them, this forms a closed walk of length $2k+1$ in $G$, so $G$ contains a closed odd walk of length at most $2k+1$, a contradiction.
        This proves that $\mu_i(Q_j) \leq 1/2$ for every $i \in [p]$.
        Then, using \cref{cl:sum-mu}, we have \begin{align*}
            |\supp(Q_j)| &\geq \sum_{i \in \supp(Q_j)}2\mu_i(Q_j) \\
            &= 2 \sum_{i \in \supp(Q_j)}\mu_i(Q_j) \\
            &\geq 2\left(\frac{1}{3s}+\varepsilon-6\eta\right)p,
        \end{align*}
        as claimed.
    \end{subproof}

    \begin{claim}\label{cl:supp-edgeless}
        If $i, i' \in \bigcup_{j \in [s]} \supp(Q_j)$, there is no edge between $V_i$ and $V_{i'}$ in $G'$.
    \end{claim}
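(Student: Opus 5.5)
We argue by contradiction. Suppose $i \in \supp(Q_j)$ and $i' \in \supp(Q_{j'})$ for some $j, j' \in [s]$ (possibly $j = j'$), and that $G'$ has an edge between $V_i$ and $V_{i'}$. The aim is to build a closed odd walk of length at most $2k+1$ in $G$, contradicting the odd girth assumption.

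\textbf{Reduction to a regular pair.} If $i = i'$ there is nothing to show, since $V_i$ is independent in $G'$. So assume $i \neq i'$. The pair $(V_i, V_{i'})$ is $\eta$-regular in $G'$ with density either $0$ or greater than $2\eta$. Since it contains an edge, its density is greater than $2\eta$. Moreover, $\mu_i(Q_j) > 0$ means $\mu_i(Q_j) \geq \eta$. Hence every $x \in Q_j$ satisfies $|N_{G'}(x) \cap V_i| \geq \eta m = \eta|V_i|$. Likewise every $y \in Q_{j'}$ has at least $\eta m$ neighbours in $V_{i'}$.

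\textbf{Key step: a path of length $3$.} For any $x \in Q_j$ and $y \in Q_{j'}$, set $U' = N_{G'}(x) \cap V_i$ and $W' = N_{G'}(y) \cap V_{i'}$. Both have relative size at least $\eta$, so $\eta$-regularity gives
\[ d(U', W') \geq d(V_i, V_{i'}) - \eta > \eta > 0. \]
So there are $a \in U'$ and $b \in W'$ adjacent in $G' \subseteq G$, and $x\,a\,b\,y$ is a walk of length $3$ in $G$. This step is where the regularity and the quantisation of $\mu$ are really used.

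\textbf{Closing an odd walk.} We split into two cases.
\begin{itemize}
\item If $j = j'$, take $x = y$ (any vertex of $Q_j$). Then $x\,a\,b\,x$ is a triangle in $G$, a contradiction since $2k+1 \geq 5$.
\item If $j \neq j'$, use \cref{cl:exists-walk-between} to pick $x \in Q_j$ and $y \in Q_{j'}$ joined by an odd walk of length at most $2k-2$, hence at most $2k-3$. Concatenating it with $x\,a\,b\,y$ gives a closed walk of even total length at most $2k$, which is useless.
\end{itemize}
The second case needs more care, so redo the parity. The odd walk from \cref{cl:exists-walk-between} has odd length $\ell \leq 2k-3$. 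Adding the length-$3$ path gives total length $\ell + 3$, which is even. The right choice is the \emph{even} walk from \cref{cl:exists-walk-between}, of length $\ell' \leq 2k-2$. Adding $3$ gives an odd closed walk of length at most $2k+1$, the desired contradiction by \cref{obs:odd-walk-cycle}.

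The main point to get right is this parity bookkeeping: the length-$3$ connector must be paired with the even walk of \cref{cl:exists-walk-between}, and for $j = j'$ we use the same vertex at both ends to obtain a triangle.
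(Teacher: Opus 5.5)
Your proof is correct and matches the paper's argument: regularity of the dense pair $(V_i,V_{i'})$ yields a walk of length $3$ between vertices of $Q_j$ and $Q_{j'}$. This closes into a triangle when $j=j'$, and, combined with the even walk from \cref{cl:exists-walk-between}, into a closed odd walk of length at most $2k+1$ when $j\neq j'$. In a final write-up, delete the abandoned odd-walk attempt in your second bullet, since only the even-walk version is needed.
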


    \begin{subproof}{\cref{cl:supp-edgeless}}
        For $i = i'$, this is immediate since $V_i$ is an independent set in $G'$.
        Suppose now that $i \neq i'$, and suppose by contradiction that there is an edge between $V_i$ and $V_{i'}$ in $G'$.
        Thus, the pair $(V_i, V_{i'})$ is $\eta$-regular in $G'$, with density greater than $2\eta$.
        Consider first the case where $i, i'$ are in the support of the same $Q_j$, and let $y \in Q_j$.
        Since $i, i' \in \supp(Q_j)$, we have $\mu_i(y), \mu_{i'}(y) > 0$ so by definition $y$ has at least $\eta \cdot m$ neighbors both in $V_i$ and in $V_{i'}$.
        Since $(V_i, V_{i'})$ is $\eta$-regular in $G'$, with density greater than $2\eta$, there is an edge in $G'$ between $N_{G'}(y) \cap V_i$ and $N_{G'}(y) \cap V_{i'}$. 
        This gives a triangle in $G'$, which is impossible.
        Consider now the case where $i \in \supp(Q_j)$ and $i' \in \supp(Q_{j'})$ with $j \neq j'$.
        By \cref{cl:exists-walk-between}, there exist $x_j \in Q_j$ and $x_{j'} \in Q_{j'}$ that are connected by a walk in $G$ with an even number of edges, and at most $2k-2$ edges.
        By definition of $\mu$, $x_j$ has at least $\eta \cdot m$ neighbors in $V_i$ and $x_{j'}$ has at least $\eta \cdot m$ neighbors in $V_{i'}$.
        Since $(V_i, V_{i'})$ is $\eta$-regular in $G'$, with density greater than $2\eta$, there is an edge in $G'$ between $N_{G'}(x_j) \cap V_i$ and $N_{G'}(x_{j'}) \cap V_{i'}$.
        Thus, $x_j$ and $x_{j'}$ are connected by a walk in $G$ with $3$ edges.
        This proves the existence of a closed odd walk of length at most $2k+1$ in $G$, which is impossible.
    \end{subproof}

    For every $Q \in \mathcal{Q}$, let $\supp^2(Q)$ be the set of indices $i \in [p]$ such that there is an edge in $G'$ between $V_i$ and $V_{i'}$ for some $i' \in \supp(Q)$.
    Again, note that $\supp^2(Q)$ corresponds to a set of regularity classes.

    \begin{claim}\label{cl:supp2-disjoint}
        The sets $\supp^2(Q_1), \ldots, \supp^2(Q_{s})$ are pairwise disjoint.
    \end{claim}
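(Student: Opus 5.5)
Suppose for contradiction that some index $i \in \supp^2(Q_j) \cap \supp^2(Q_{j'})$ with $j \neq j'$. By definition of $\supp^2$, there exist $a \in \supp(Q_j)$ and $a' \in \supp(Q_{j'})$ such that both pairs $(V_a, V_i)$ and $(V_{a'}, V_i)$ carry edges of $G'$. By the last property of the regularity partition, each of these pairs is then $\eta$-regular with density greater than $2\eta$. The aim is to build a short walk in $G$ from $Q_j$ to $Q_{j'}$ through $V_a$, $V_i$ and $V_{a'}$, and combine it with the odd walk from \cref{cl:exists-walk-between} to produce a closed odd walk of length at most $2k+1$.

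First take vertices. By \cref{cl:exists-walk-between}, there are $x_j \in Q_j$ and $x_{j'} \in Q_{j'}$ joined in $G$ by an odd walk of length at most $2k-3$. Since $a \in \supp(Q_j)$, the vertex $x_j$ has a set $A = N_{G'}(x_j) \cap V_a$ with $|A| \geq \eta m$. Likewise $A' = N_{G'}(x_{j'}) \cap V_{a'}$ satisfies $|A'| \geq \eta m$. Regularity of $(V_a, V_i)$ with density greater than $2\eta$ applies to $A$ against $V_i$: the density $d(A, V_i)$ exceeds $\eta$. An averaging argument, or \cref{lem:regular-few-sparse} in its symmetric form, then shows that fewer than $\eta m$ vertices of $V_i$ have fewer than $\eta|A|$ neighbours in $A$. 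So the set $B \subseteq V_i$ of vertices with a $G'$-neighbour in $A$ has size at least $|V_i|(1-\eta)$. The same argument gives a set $B' \subseteq V_i$ of size at least $(1-\eta)m$ whose vertices have a neighbour in $A'$. Since $\eta < 1/2$, $B$ and $B'$ intersect.

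A vertex $z \in B \cap B'$ gives a walk $x_j, u, z, u', x_{j'}$ of length $4$ in $G$, where $u \in A$ and $u' \in A'$. Together with the odd walk of length at most $2k-3$ between $x_j$ and $x_{j'}$, this is a closed odd walk of length at most $2k+1$. By \cref{obs:odd-walk-cycle}, it contradicts the odd girth assumption on $G$.

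This even avoids needing the common-neighbour property of $V_i$. If one prefers not to intersect $B$ and $B'$, an alternative is to pick any $z \in B$ and $z' \in B'$ in $V_i$ and use the common neighbour of $z, z'$ guaranteed for regularity classes. That gives length $6$, which is too long, so the intersection route is the one to use.

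The main obstacle is this middle step: extracting from the regularity of both pairs that almost all of $V_i$ sees $A$, and almost all sees $A'$. This needs $\eta$ small relative to the density lower bound, and the stated density bound $>2\eta$ together with $|A| \geq \eta m$ is exactly what makes regularity applicable.
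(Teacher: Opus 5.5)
Your proof is correct and is essentially the paper's argument: the same odd walk of length at most $2k-3$ from \cref{cl:exists-walk-between}, closed up by a length-$4$ walk $x_j,u,z,u',x_{j'}$ through $V_a$, $V_i$, $V_{a'}$. The only difference is the middle step. The paper picks $u\in A$ with at least $\eta m$ neighbours in $V_i$ (via \cref{lem:regular-few-sparse} applied to $(V_a,V_i)$), then uses regularity of $(V_i,V_{a'})$ to find an edge between $N_{G'}(u)\cap V_i$ and $A'$. You instead show that more than $(1-\eta)m$ vertices of $V_i$ see $A$, and likewise $A'$, and intersect the two sets.

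Your middle step is not literally \cref{lem:regular-few-sparse}, since the pair $(A,V_i)$ need not be $\eta$-regular, but its proof goes through verbatim. If at least $\eta m$ vertices of $V_i$ had no neighbour in $A$, regularity of $(V_a,V_i)$ applied to this set and $A$ would force density greater than $\eta$ rather than $0$. This use of regularity is allowed because $|A|\ge\eta m$.
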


    \begin{subproof}{\cref{cl:supp2-disjoint}}
        Let $j \neq j' \in [s]$ and suppose by contradiction that there exists $\ell \in \supp^2(Q_j) \cap \supp^2(Q_{j'})$.
        Then, there exist $i \in \supp(Q_j)$ and $i' \in \supp(Q_{j'})$ such that there is an edge in $G'$ between $V_{\ell}$ and $V_i$, and between $V_{\ell}$ and $V_{i'}$.
        By \cref{cl:exists-walk-between}, there exist $x_j \in Q_j$ and $x_{j'} \in Q_{j'}$ that are connected by a walk in $G$ with an odd number of edges, and at most $2k-2$ edges (hence at most $2k-3$ edges).
        Since $i \in \supp(Q_j)$, we have $|N_{G'}(x_j) \cap V_{i}| \geq \eta \cdot m$.
        Similarly, we have $|N_{G'}(x_{j'}) \cap V_{i'}| \geq \eta \cdot m$.
        The pair $(V_i, V_{\ell})$ is $\eta$-regular with density greater than $2\eta$ in $G'$ so by \cref{lem:regular-few-sparse}, the number of vertices $u \in V_i$ such that $|N_{G'}(u) \cap V_{\ell}| < \eta|V_{\ell}|$ is less than $\eta|V_i|$.
        Thus, there exists $z \in N_{G'}(x_j) \cap V_i$ such that $|N_{G'}(z) \cap V_{\ell}| \geq \eta|V_{\ell}|$.
        Since $(V_{\ell}, V_{i'})$ is $\eta$-regular in $G'$, with density greater than $2\eta$, there is an edge in $G'$ between $N_{G'}(z) \cap V_{\ell}$ and $N_{G'}(x_{j'}) \cap V_{i'}$. 
        Thus, there is a walk of length $4$ between $x_j$ and $x_{j'}$ in $G$.
        This proves the existence of a closed odd walk of length at most $2k+1$ in $G$, which is impossible.
    \end{subproof}

    \begin{claim}\label{cl:size-supp2}
        For every $j \in [s]$, we have $|\supp^2(Q_j)| \geq \left(\frac{1}{3s}+\varepsilon-6\eta\right)p$.
    \end{claim}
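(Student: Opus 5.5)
Fix $j\in[s]$ and a vertex $y\in Q_j$ (for definiteness, an endpoint of the walk from \cref{cl:exists-walk-back}). The plan is to produce a vertex $z$ in the $G'$-neighborhood of $y$, inside a class $V_i$ with $i\in\supp(Q_j)$, whose $G'$-neighbors lie almost entirely inside $\bigcup_{\ell\in\supp^2(Q_j)}V_\ell$ together with $V_0$. Then the minimum degree of $z$ in $G'$ forces $|\supp^2(Q_j)|\,m$ to be large, mirroring the computation in \cref{cl:sum-mu}.

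First I pick $i\in\supp(Q_j)$, which exists by \cref{cl:sum-mu}. Next I take any $z\in N_{G'}(y)\cap V_i$; this set is nonempty since $\mu_i(y)>0$ means $|N_{G'}(y)\cap V_i|\ge \eta m$. Consider a neighbor $w$ of $z$ in $G'$ with $w\in V_\ell$ for some $\ell\in[p]$. Then $V_\ell$ and $V_i$ have an edge between them in $G'$, namely $zw$. By definition of $\supp^2(Q_j)$, this gives $\ell\in\supp^2(Q_j)$. The case $\ell=i$ is impossible, since $V_i$ is independent in $G'$. Hence
\[N_{G'}(z)\subseteq V_0\cup\bigcup_{\ell\in\supp^2(Q_j)}V_\ell .\]
Note that no regularity input is needed here: edges of $G'$ between classes directly certify membership in $\supp^2$.

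Then the counting goes as follows. We have $d_{G'}(z)\ge(\frac1{3s}+\varepsilon-3\eta)n$ and $|V_0|\le 2\eta n$. Therefore
\[|\supp^2(Q_j)|\, m\ \ge\ \Bigl(\frac1{3s}+\varepsilon-5\eta\Bigr)n .\]
Using $mp\le n$, that is $n/m\ge p$, this yields $|\supp^2(Q_j)|\ge(\frac1{3s}+\varepsilon-5\eta)p$, which is stronger than the claimed bound with $6\eta$.

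The only point needing care is that the vertex $z$ must lie in a class $V_i$ with $i\ge1$ that belongs to $\supp(Q_j)$, rather than in $V_0$. This is guaranteed because $\mu_i$ is only defined for $i\in[p]$. I expect no real obstacle. The step most worth double-checking is that the minimum-degree bound for $G'$ applies to every vertex, including $z$ (it does, as stated for all $v\in V(G)$), and that edges from $z$ to $V_0$ are absorbed by the $|V_0|$ term.
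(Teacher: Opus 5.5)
Your proof is correct and is essentially the paper's argument. The paper also takes a vertex $v \in V_i$ with $i \in \supp(Q_j)$ and observes that every class in which $v$ has a $G'$-neighbor lies in $\supp^2(Q_j)$. The only differences are that the paper counts these classes through $\supp(Q)$ for the part $Q \ni v$ and \cref{cl:sum-mu}, whereas you count $N_{G'}(z)$ directly, which avoids the rounding loss in $\mu$ and gives your slightly better $5\eta$; also, choosing $z$ as a neighbor of some $y \in Q_j$ is harmless but unnecessary, since any vertex of $V_i$ works.
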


    \begin{subproof}
        Let $j \in [s]$, let $i \in \supp(Q_j)$ and let $v \in V_i$.
        Let $Q \in \mathcal{Q}$ such that $v \in Q$.
        It follows from \cref{cl:sum-mu} that $|\supp(Q)| \geq \left(\frac{1}{3s}+\varepsilon-6\eta\right)p$.
        However, if $i' \in \supp(Q)$ then $\mu_{i'}(v) > 0$ so there is an edge between $v \in V_i$ and $V_{i'}$ in $G'$.
        Thus, $i' \in \supp^2(Q_j)$.
        Therefore, $|\supp^2(Q_j)| \geq \left(\frac{1}{3s}+\varepsilon-6\eta\right)p$.
    \end{subproof}

    \begin{claim}\label{cl:all-disjoint}
        The sets $\supp(Q_1), \ldots, \supp(Q_{s}), \supp^2(Q_1), \ldots, \supp^2(Q_{s})$ are pairwise disjoint.
    \end{claim}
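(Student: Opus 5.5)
The supports are pairwise disjoint by \cref{cl:disjoint-supports}, and the sets $\supp^2(Q_1),\ldots,\supp^2(Q_s)$ are pairwise disjoint by \cref{cl:supp2-disjoint}. So it only remains to show that $\supp(Q_j)\cap\supp^2(Q_{j'})=\emptyset$ for all $j,j'\in[s]$, including $j=j'$. I will derive this directly from \cref{cl:supp-edgeless}.

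Suppose that $\ell\in\supp(Q_j)\cap\supp^2(Q_{j'})$. By the definition of $\supp^2(Q_{j'})$, there is some $i'\in\supp(Q_{j'})$ such that $G'$ has an edge between $V_\ell$ and $V_{i'}$. Both indices $\ell$ and $i'$ lie in $\bigcup_{j''\in[s]}\supp(Q_{j''})$, so \cref{cl:supp-edgeless} says there is no edge of $G'$ between $V_\ell$ and $V_{i'}$, a contradiction.

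The case $\ell=i'$ needs no separate treatment. It is covered by \cref{cl:supp-edgeless} directly, and in any event $V_\ell$ is independent in $G'$. The argument is the same whether $j=j'$ or $j\neq j'$.

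Putting the three facts together gives the full pairwise disjointness of the $2s$ sets. I do not expect any step to be a real obstacle, since every case is already handled by the earlier claims. The only point needing care is that $\supp^2$ is defined through edges of $G'$ and not of $G$, which is exactly the graph in which \cref{cl:supp-edgeless} is stated.
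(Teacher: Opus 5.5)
Your proof is correct and follows the paper's argument. Like the paper, you use \cref{cl:disjoint-supports} and \cref{cl:supp2-disjoint} to reduce to showing $\supp(Q_j)\cap\supp^2(Q_{j'})=\emptyset$. You then observe that any index in $\supp^2(Q_{j'})$ is joined in $G'$ to a class in the union of the supports, which \cref{cl:supp-edgeless} forbids if that index is itself in the union of the supports.
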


    \begin{subproof}{\cref{cl:all-disjoint}}
        By \cref{cl:disjoint-supports} and \cref{cl:supp2-disjoint}, it suffices to show that $\supp(Q_j)$ and $\supp^2(Q_{j'})$ are disjoint for all $j, j' \in [s]$.
        Let $i' \in \supp^2(Q_1) \cup \ldots \cup \supp^2(Q_{s})$. 
        There exists $i \in \supp(Q_1) \cup \ldots \cup \supp(Q_{s})$ such that there is an edge between $V_i$ and $V_{i'}$ in $G'$ (hence in $G$), so $i' \notin \supp(Q_1) \cup \ldots \cup \supp(Q_{s})$ by \cref{cl:supp-edgeless}.
    \end{subproof}
    
    By \cref{cl:all-disjoint,cl:large-supports,cl:size-supp2}, we then have \[\left|\bigcup_{j \in [s]} \supp(Q_j) \cup \supp^2(Q_j)\right| = \sum_{j \in [s]} |\supp(Q_j)| + |\supp^2(Q_j)| \geq s \cdot 3 \cdot \left(\frac{1}{3s}+\varepsilon-6\eta\right)p > p,\] a contradiction. 
\end{proof}

For completeness, we briefly deduce the statements of \cref{thm:threshold-Kt,thm:threshold-odd-girth,thm:threshold-triangle} from the preceding results.

\begin{proof}[Proof of~\cref{thm:threshold-Kt,thm:threshold-odd-girth,thm:threshold-triangle}]
By \cref{proof:lowerbound,prop:proof-upper-bound} we have that \[\delta_{\hom}(\{C_3, C_5, \ldots, C_{2k+1}\}, \{C_3, \ldots, C_{2k-3}, \Gamma_{k-2}^t\}) = \frac{1}{3|V(\Gamma_{k-2}^t)|}\] for every $k \geq 2$ and $t \geq 3$. 

Observe that by the definition of the generalized Mycielskians, it follows that $\Gamma_0^t=K_t$ for every $t\ge 2$. Hence, plugging $k=2$ into the above we immediately find that
\[\delta_{\hom}(\{C_3, C_5\}, \{K_t\}) =\frac{1}{3t}\] for every $t \geq 3$. This establishes the statement of~\cref{thm:threshold-Kt}, and hence also \cref{thm:threshold-triangle}.

Now, for~\cref{thm:threshold-odd-girth}, let $k\ge 2$ and a graph $H$ of odd girth at least $2k-1$ (and hence greater than $2(k-2)+1$) be given. Then by applying~\cref{lem:Myc-universal} to the graph $H$ and with parameter $k-2\ge 0$ in place of $k$, we find that there exists some $t\ge 3$ such that $H$ is a subgraph of $\Gamma_{k-2}^t$. Hence, any bounded-size $\{C_3, \ldots, C_{2k-3}, H\}$-free graph is automatically also a bounded-size $\{C_3,\ldots,C_{2k-3},\Gamma_{k-2}^t\}$-free graph. It follows that 
\begin{align*}\delta_{\hom}(\{C_3,C_5,\ldots,C_{2k+1}\},\{C_3,\ldots,C_{2k-3},H\})&\ge \delta_{\hom}(\{C_3,C_5,\ldots,C_{2k+1}\},\{C_3,\ldots,C_{2k-3},\Gamma_{k-2}^t\})\\ &=\frac{1}{3|V(\Gamma_{k-2}^t)|}\\&>0.\end{align*} This establishes also the statement of~\cref{thm:threshold-odd-girth}, concluding the proof.
\end{proof}

\bibliographystyle{alphaurl}
\bibliography{biblio}

\end{document}